\newtheorem{thm}{Theorem}[section]
\newtheorem{prop}[thm]{Proposition}
\newtheorem{cor}[thm]{Corollary}
\newtheorem{conj}[thm]{Conjecture}
\numberwithin{equation}{section}
\def\OP{{\mathcal O}{\mathcal P}}
\def\op{\mathrm{op}}
\def\Z{{\mathbb Z}}
\def\Sy{{\mathfrak S}}
\def\P{{\mathbb{P}}}
\def\Z{{\mathbb{Z}}}
\begin{document}

\title[Pattern avoidance in ordered partitions]{Pattern avoidance
in ordered set partitions and words}

\author[A. Kasraoui]{Anisse Kasraoui}
\address{Fakult\"at f\"ur Mathematik, Universit\"at Wien,
Nordbergstrasse 15, A-1090 Vienna, Austria}
\email{anisse.kasraoui@univie.ac.at}
\thanks{Research supported by the grant S9607-N13 from Austrian Science Foundation FWF
 in the framework of the National Research Network  ``Analytic Combinatorics and Probabilistic Number theory".}

\begin{abstract}
We consider the enumeration of ordered set partitions avoiding a
permutation pattern, as introduced by Godbole, Goyt, Herdan and
Pudwell. Let $\op_{n,k}(p)$ be the number of ordered set partitions
of $\{1,2,\ldots,n\}$ into $k$ blocks that avoid a permutation
pattern $p$. We establish an explicit identity between the number
$\op_{n,k}(p)$ and the numbers of words avoiding the inverse of $p$.
This identity allows us to easily translate results on
pattern-avoiding words obtained in earlier works into equivalent
results on pattern-avoiding ordered set partitions. In particular,
\emph{(a)} we determine the asymptotic growth rate of the sequence
$(\op_{n,k}(p))_{n\geq 1}$ for
 every positive $k$ and every permutation pattern $p$,
\emph{(b)} we partially confirm a conjecture of Godbole et al.
concerning the variation of
 the sequences $(\op_{n,k}(p))_{1\leq k\leq n}$,
\emph{(c)} we undertake a detailed study of the number of ordered
set partitions avoiding a pattern of length 3.
\end{abstract}
\maketitle

\section{Introduction}

\subsection{} This paper is concerned with the enumeration of pattern-avoiding ordered set partitions.
The subject, which can be seen as a far-reaching generalization of
the study of pattern-avoiding permutations, was initiated by
Godbole, Goyt, Herdan and Pudwell in~\cite{Godbole}.
 Recall that an ordered partition of a set $S$ is a sequence of nonempty and mutually
disjoint subsets, called blocks, whose union is $S$. When $S$ is a
subset of integers, it is usual to separate the blocks by a slash
and to arrange the elements within each block in increasing order.
We let $\OP_{n,k}$ denote the set of ordered partitions of
$[n]:=\{1,2,\ldots,n\}$ into $k$ blocks. For instance, $2\,7/3/1\,
4\, 8/5\, 6$ and $3/2\,7/1\, 4\, 8/5\, 6$ are two (distinct) members
of $\OP_{8,4}$.

Godbole et al.~\cite{Godbole} considered the following notion of
pattern containment: an ordered partition $\pi=B_1/B_2/\cdots/B_k$
in $\OP_{n,k}$ is said to contain a permutation $p=p_1\,p_2\,\cdots
\,p_m$ in $\Sy_m$, the symmetric group of the set~$[m]$, as a
pattern if there exist integers $i_1$, $i_2$,\ldots, $i_m$, with
$1\leq i_1<i_2<\cdots<i_m\leq k$, and there exists $b_j\in B_{i_j}$
such that $b_1\,b_2\,\cdots \,b_m$ is order isomorphic to~$p$.
Otherwise we say that $\pi$ avoids $p$. For example, $2\,7/3/1\, 4\,
8/5\, 6$ contains the pattern $p= 213$, as evidenced (for instance)
by $b_1=2\in B_1$, $b_2=1\in B_3$ and $b_3=5\in B_4$. The number of
members of $\OP_{n,k}$ that avoid the pattern $p$ will be denoted by
$\op_{n,k}(p)$. The special case $k=n$ is of particular interest
since we clearly have
\begin{align}\label{eq:op-perm}
 \op_{n,n}(p)=s_n(p),
\end{align}
where $s_n(p)$ stands for the number of permutations in $\Sy_n$ that
avoid the pattern~$p$ in the usual sense (see
e.g.~\cite[Chapter~4]{Bona}).

Finding a closed formula for $\op_{n,k}(p)$  is in general a
hopeless task. Obviously, we have $\op_{n,k}(1)=0$ for all $n\geq
k\geq 1$. It is also easy to show~\cite{Godbole} that
\begin{align}
 \op_{n,k}(1\,2)=\op_{n,k}(2\,1)=\binom{n-1}{k-1} \qquad(n\geq k\geq1).
\end{align}
The first nontrivial case is that of patterns of length three.
Godbole et al. proved that $\op_{n,k}(p)$, for any $n,k\geq 1$, is
the same for all $p\in\Sy_3$, i.e. $\op_{n,k}(p)=\op_{n,k}(321)$ for
all $p\in\Sy_3$. By~\eqref{eq:op-perm}, this nicely generalizes
Knuth's remarkable observation (see e.g.~\cite[Chapter~4]{Bona})
that the number of permutations of $[n]$ that avoid the pattern $p$
is the same for all $p\in\Sy_3$. Table~\ref{table:1} lists the first
few values of~$\op_{n,k}(321)$.
\begin{table}[h]\label{table:1}
\centering
\begin{tabular}{|c| r r r r r r r r r r|}
\hline $n\setminus k$
  & 1&  2&    3&    4&     5&     6&     7&     8&      9&   10\\
\hline
1 &1&    &     &     &      &      &      &      &      &     \\
2 &1&   2&     &     &      &      &      &      &      &     \\
3 &1&   6&    5&     &      &      &      &      &      &     \\
4 &1&  14&   27&   14&      &      &      &      &      &     \\
5 &1&  30&   99&  112&    42&      &      &      &      &     \\
6 &1&  62&  307&  564&   450&   132&      &      &      &     \\
7 &1& 126&  867& 2284&  2895&  1782&   429&      &      &     \\
8 &1& 254& 2307& 8124& 14485& 13992&  7007&  1430&      &     \\
9 &1& 510& 5891&26492& 62085& 83446& 65065& 27456&  4862&     \\
10&1&1022&14595&81148&239269&418578&450905&294632&107406&16796\\
\hline
\end{tabular}
\vspace{0.3cm} \caption{The number of 321-avoiding ordered set
partitions of $[n]$ into $k$ blocks}
\end{table}
That the first values of the diagonal $(\op_{n,n}(321))_{n\geq 1}$
coincide with those of the Catalan sequence comes as no surprise
because of Knuth's well-known result (see
e.g.~\cite[Chapter~4]{Bona}) that $s_n(p)$, for all $p\in\Sy_3$, is
the $n$-th Catalan number $C_n=\frac{1}{n+1}\binom{2n}{n}$.

The study of the array $[\op_{n,k}(321)]_{n\geq k\geq1}$, which has
also been considered in~\cite{Godbole,Chen}, is one of the central
theme in this paper with which Section~3 is wholly concerned. We
will also derive interesting results for general pattern $p$. To
this end, we first need to state the key result of this paper which
is a simple identity relating the number of ordered partitions
avoiding a pattern $p$ with the numbers of words avoiding $p^{-1}$,
the inverse of $p$.

\subsection{Pattern-avoiding ordered set partitions and words}  Pattern avoidance has already been studied in
other contexts than permutations, notably in words~\cite{Bur,BrMa}
and (unordered) set partitions~\cite{Kla,Sag}. We refer the reader
to the recent textbooks~\cite{HeMan-w,HeMan-p,Kit} for a survey to
these topics and further references.

There is a well-known intimate connection between ordered set
partitions and words that appears to nicely keep track of pattern
containment. As usual, the set of words $w=w_1\,w_2\,\cdots \,w_n$
of length $n$ with letters in an alphabet $A$ will be denoted by
$A^n$. We also let $SW_{n,A}$ denote the set of words in $A^n$ that
contain at least one occurrence of each $i\in A$. With each member
$\pi=B_1/B_2/\cdots/B_k$ of $\OP_{n,k}$ there is associated a word
$w(\pi)=w_1\,w_2\,\cdots \,w_n$ in $[k]^n$ such that $w_i=j$ if and
only if $i\in B_j$. For instance, $\pi=2\,7/3/1\, 4\, 8/5\, 6$ is
sent to the word $w(\pi)=3\,1\,2\,3\,4\,4\,1\,3$. This
correspondence establishes a bijection between $\OP_{n,k}$ and
$SW_{n,[k]}$ and keeps track of pattern containment as follows: an
ordered partition $\pi$ contains a pattern $p$ if and only if its
associated word $w(\pi)$ contains $p^{-1}$, as is easily verified.
Consequently, we have the relation
\begin{align}\label{eq:OPvsSurjWORDS}
  \op_{n,k}(p)=\left|SW_{n,[k]}(p^{-1})\right| \qquad(n,k\geq 1),
 \end{align}
where $SW_{n,A}(q)$ stands for the set of words in $SW_{n,A}$ that
avoid $q$. Godbole et al.~\cite[Section~3]{Godbole} and Chen et
al.~\cite[Equation~(3.3)]{Chen} also observed this relation, but the
next important identity does not seem to have been observed (though
it is immediate from~\eqref{eq:OPvsSurjWORDS} and the
inclusion-exclusion principle). For $A\subseteq \P$ let $A^n(p)$
denote the set of words in $A^n$ that avoid a pattern~$p$.
\begin{prop}\label{prop:OPvsWORDS}
Let $p$ be a permutation. Then for all $n\geq k\geq 1$, we have
\begin{align}\label{eq:OPvsWO}
\op_{n,k}(p)&=\sum_{j=1}^k \binom{k}{j}(-1)^{k-j} |[j]^n(p^{-1})|.
\end{align}
\end{prop}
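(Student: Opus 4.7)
The plan is to deduce the identity from \eqref{eq:OPvsSurjWORDS} by a standard binomial inversion, using the fact that pattern avoidance is invariant under order-preserving relabelings of the alphabet.

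First I would start from the relation $\op_{n,k}(p)=|SW_{n,[k]}(p^{-1})|$ given in \eqref{eq:OPvsSurjWORDS} and expand $|[j]^n(p^{-1})|$ by partitioning the words in $[j]^n(p^{-1})$ according to their actual set of letters. If $A\subseteq [j]$ is the set of letters used by a word, then such a word lies in $SW_{n,A}$. Since whether a word contains $p^{-1}$ depends only on the relative order of its letters, the natural order-preserving bijection between $A$ and $[|A|]$ yields $|SW_{n,A}(p^{-1})|=|SW_{n,[|A|]}(p^{-1})|$. Combining these observations with \eqref{eq:OPvsSurjWORDS}, I obtain
\begin{align*}
|[j]^n(p^{-1})|=\sum_{A\subseteq [j]} |SW_{n,A}(p^{-1})|=\sum_{i=1}^j \binom{j}{i}\op_{n,i}(p).
\end{align*}

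Next I would apply the classical binomial inversion formula: if $a_k=\sum_{j=1}^k \binom{k}{j} b_j$ for all $k\geq 1$, then $b_k=\sum_{j=1}^k \binom{k}{j}(-1)^{k-j} a_j$. Setting $a_j:=|[j]^n(p^{-1})|$ and $b_i:=\op_{n,i}(p)$, the displayed relation above is exactly the hypothesis of binomial inversion, and the conclusion gives the desired identity \eqref{eq:OPvsWO}.

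There is essentially no obstacle here: the proof is a one-line application of inclusion–exclusion, and the only conceptual point worth emphasizing is the invariance of $p^{-1}$-avoidance under order-preserving relabelings of the alphabet, which justifies replacing $|SW_{n,A}(p^{-1})|$ by $|SW_{n,[|A|]}(p^{-1})|$ and hence by $\op_{n,|A|}(p)$.
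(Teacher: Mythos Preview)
Your proof is correct and is essentially the same as the paper's: the paper applies inclusion--exclusion directly to express $|SW_{n,[k]}(p^{-1})|$ as an alternating sum of $|A^n(p^{-1})|$ over subsets $A\subseteq[k]$ and then uses the relabeling invariance $|A^n(p^{-1})|=|[|A|]^n(p^{-1})|$, whereas you first record the forward identity $|[j]^n(p^{-1})|=\sum_i\binom{j}{i}\op_{n,i}(p)$ and then binomially invert. These are the two standard equivalent formulations of the same inclusion--exclusion argument, and the only substantive point in both is precisely the order-preserving relabeling invariance you highlight.
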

\begin{proof}
 From the definition of $SW_{n,[k]}(p^{-1})$ and the inclusion-exclusion principle, we have
 \begin{align}\label{eq:incl-excl}
\left|SW_{n,[k]}(p^{-1})\right|=\sum_{A\subseteq[k]}(-1)^{k-|A|}\left|A^n(p^{-1})\right|.
\end{align}
 Obviously, if $|A|=j$ we have $\left|A^n(p^{-1})\right|=\left|[j]^n(p^{-1})\right|$. If we plug this into~\eqref{eq:incl-excl}
 and then combine the resulting equation with~\eqref{eq:OPvsSurjWORDS}, we arrive at the desired identity.
\end{proof}

Proposition~\ref{prop:OPvsWORDS}, which can be seen as the key
result of this paper, is important for us because it allows us to
easily translate any result on pattern-avoiding words (obtained in
previous works) into an equivalent result on pattern-avoiding
ordered partitions. For example, by~\eqref{eq:OPvsWO}, the result
that $\op_{n,k}(p)$ is the same for any pattern $p\in\Sy_3$ is
equivalent to a result of Burstein~\cite{Bur} which asserts that
$|[k]^n(p)|$ is the same for any pattern $p\in\Sy_3$. We shall not
translate all existing results on pattern-avoiding words
(see~\cite{HeMan-w} for a survey) but we limit our-self to results
that permit us to study questions that Goldbole et al. raised. In
Section~2, we present results on the asymptotic enumeration of
pattern-avoiding ordered partitions. These results are immediately
derived from Proposition~\ref{prop:OPvsWORDS} and results due to
Br\"{a}nd\'en and Mansour~\cite{BrMa}, and Regev~\cite{Reg}. In
Section~3, we shall rely on results of Burstein~\cite{Bur} to
undertake a detailed study of the array~$[\op_{n,k}(321)]_{n\geq
k\geq1}$.

 \section{Growth-rate and the monotonicity}
 Let us denote the length of a pattern $p$ by $|p|$. Clearly, every ordered partition with at most $|p|-1$ blocks
 avoids $p$. So  we have
 \begin{align}\label{eq:SmallPattern}
  \op_{n,k}(p)&=|\OP_{n,k}|=k!\,S(n,k)\qquad(1\leq k<|p|,\;n\geq 1),
 \end{align}
 where $S(n,k)$ are the Stirling numbers of the second kind. From well-known properties of Stirling numbers, it follows
 that the generating function $\sum_{n\geq 0} \op_{n,k}(p)\, x^n$ is
rational (equivalently, the sequence $(\op_{n,k}(p))_{n\geq 1}$
satisfies a linear recurrence) for each positive $k$ less than
$|p|$. It is remarkable that this is actually true, not only for $k$
less than $|p|$, but for every positive $k$.
 \begin{thm}\label{thm:Cor-BraMan}
 For every pattern $p$ and every positive $k$, the generating function $\sum_{n\geq 0} \op_{n,k}(p)\, x^n$  is
rational.
 \end{thm}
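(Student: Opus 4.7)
The plan is to deduce the theorem directly from Proposition~\ref{prop:OPvsWORDS} together with the rationality result of Br\"and\'en and Mansour for pattern-avoiding words over a finite alphabet. Specifically, Br\"and\'en and Mansour proved that for every permutation pattern $q$ and every fixed positive integer $j$, the generating function $\sum_{n\geq 0} |[j]^n(q)|\, x^n$ is rational (equivalently, the sequence $(|[j]^n(q)|)_{n\geq 1}$ satisfies a linear recurrence with constant coefficients). Granting this, the theorem follows immediately: applying Proposition~\ref{prop:OPvsWORDS} and switching the order of summation gives
\begin{align*}
\sum_{n\geq 0} \op_{n,k}(p)\, x^n &= \sum_{j=1}^k \binom{k}{j}(-1)^{k-j}\sum_{n\geq 0} |[j]^n(p^{-1})|\, x^n,
\end{align*}
and the right-hand side is a finite $\Q$-linear combination of rational functions, hence rational.

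Since the only nontrivial ingredient is the Br\"and\'en--Mansour theorem (which I am free to invoke, as it is cited in the introduction of this paper), there is essentially no obstacle; the proof reduces to writing the two displays above. Note also that one has to handle $n=0$ separately (the identity in Proposition~\ref{prop:OPvsWORDS} is stated for $n\geq k\geq 1$, whereas the generating function starts at $n=0$), but since only finitely many initial terms are affected, this does not alter rationality.

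If desired, one could alternatively formulate the argument through \eqref{eq:OPvsSurjWORDS}, noting that $|SW_{n,[k]}(p^{-1})|$ is, by inclusion-exclusion, a $\Z$-linear combination of the quantities $|[j]^n(p^{-1})|$ for $1\leq j\leq k$; this is precisely the content of Proposition~\ref{prop:OPvsWORDS}, so the two routes coincide. The upshot is that the theorem is really a corollary of Proposition~\ref{prop:OPvsWORDS} plus a known rationality result for words, which is presumably why it is labelled \emph{Cor-BraMan} in the source.
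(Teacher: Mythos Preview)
Your argument is correct and is essentially identical to the paper's own justification: the paper states that the theorem is immediate from Proposition~\ref{prop:OPvsWORDS} together with the Br\"and\'en--Mansour rationality theorem for $\sum_{n\geq 0}|[k]^n(p)|\,x^n$, which is exactly what you spell out. Your remark about the finitely many initial terms is a harmless technicality and does not deviate from the paper's approach.
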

This result is immediate from Proposition~\ref{prop:OPvsWORDS} and a
theorem of Br\"{a}nd\'en and Mansour~\cite[Theorem~3.1]{BrMa} which
states that the generating function $\sum_{n\geq 0} |[k]^n(p)|\,
x^n$ is rational for every positive integer~$k$ and every
pattern~$p$. Alternatively, this can also be derived directly by
adapting the proof of a result of Klazar~\cite[Theorem~3.1]{Kla}
stating that the ordinary generating function of (unordered) set
partitions into $k$ blocks avoiding a pattern $p$ is rational for
every positive integer~$k$ and every pattern~$p$.

The preceding theorem, combined with the theory of rational
generating functions, leads to the following theorem which is one of
the main result in~\cite{Godbole}.
  \begin{thm}[Godbole et al.]\label{thm:God}
  For each fixed $k\geq 1$ and every pattern $p$, the limit $\underset{n\to\infty}\lim \left(\op_{n,k}(p)\right)^{\frac{1}{n}}$
  exists in $[0,\infty)$.
 \end{thm}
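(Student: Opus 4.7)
The plan is to deduce Theorem~\ref{thm:God} from Theorem~\ref{thm:Cor-BraMan} by standard coefficient asymptotics for rational generating functions with nonnegative coefficients. Fix $k\geq 1$ and a pattern $p$, and set $a_n:=\op_{n,k}(p)$ and $F(x):=\sum_{n\geq 0}a_n x^n$. The embedding $\OP_{n,k}\hookrightarrow[k]^n$ of Section~1.2 gives $0\leq a_n\leq k^n$, so the radius of convergence $\rho$ of $F$ lies in $[1/k,+\infty]$. If $F$ is a polynomial, then $a_n=0$ eventually and the limit is $0$; otherwise, by Theorem~\ref{thm:Cor-BraMan}, $F$ is rational with $\rho<+\infty$ and only finitely many poles.

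Since $a_n\geq 0$, Pringsheim's theorem forces the positive real number $\rho$ itself to be a pole of $F$; denote its order by $m\geq 1$. A partial-fraction decomposition of $F$ and extraction of coefficients then yield, for all $n$ sufficiently large,
\begin{equation*}
a_n=A\binom{n+m-1}{m-1}\rho^{-n}+\sum_{\alpha\neq\rho}q_\alpha(n)\,\alpha^{-n},
\end{equation*}
where $\alpha$ ranges over the remaining poles of $F$, each $q_\alpha$ is a polynomial of degree one less than the order of $\alpha$, and $A=\lim_{x\to\rho^-}(1-x/\rho)^m F(x)$. The positivity of $(1-x/\rho)^m$ on $[0,\rho)$ combined with $a_n\geq 0$ gives $A\geq 0$, while $A\neq 0$ since $\rho$ is a pole of exact order $m$; hence $A>0$, and the leading term is of exact order $n^{m-1}\rho^{-n}$. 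Together with Hadamard's formula $\limsup a_n^{1/n}=1/\rho$, this will give $\lim a_n^{1/n}=1/\rho\in(0,+\infty)$.

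The step that I expect to require the most care is the lower bound $\liminf a_n^{1/n}\geq 1/\rho$: in principle, oscillating contributions coming from other poles on the circle $|z|=\rho$ could periodically swamp the positive contribution of the real pole. I would handle this by the standard observation that, for a rational series with nonnegative coefficients, the pole at $\rho$ has maximum order among all poles of modulus $\rho$ (otherwise the leading trigonometric polynomial in $n$ would have vanishing mean yet be nonnegative, and hence identically zero, contradicting maximality), together with the eventual positivity $\op_{n,k}(p)>0$ for every $n\geq k$ when $|p|\geq 2$, which can be checked by exhibiting the ordered partition whose first $k-1$ blocks are the singletons $\{n\},\{n{-}1\},\dots,\{n{-}k{+}2\}$ and whose last block is $\{1,\dots,n{-}k{+}1\}$ (for $p$ not the decreasing pattern of length $\leq k$, and its mirror image otherwise). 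These two ingredients together rule out any periodic cancellation and deliver the claimed limit.
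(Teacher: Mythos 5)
Your overall route (rationality via Theorem~\ref{thm:Cor-BraMan}, then coefficient asymptotics for nonnegative rational series) is exactly the one-line derivation the paper itself gestures at, and everything up to the last step is fine. But the closing step, which you yourself flag as the delicate one, has a genuine gap: the two ingredients you invoke --- that the positive real pole $\rho$ has maximal order among the poles of modulus $\rho$, and that $a_n=\op_{n,k}(p)>0$ for all $n\geq k$ --- do \emph{not} imply $\liminf a_n^{1/n}\geq 1/\rho$. Take $F(x)=\frac{1}{1-x}+\frac{1}{1-2x^2}$: it is rational with strictly positive coefficients ($a_n=1+2^{n/2}$ for $n$ even, $a_n=1$ for $n$ odd), its radius of convergence is $\rho=1/\sqrt{2}$, and the pole at $\rho$ has order $1$, which is maximal among the two poles $\pm 1/\sqrt{2}$ on that circle; yet $a_n^{1/n}$ has the two distinct limit points $1$ and $\sqrt{2}$. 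Your mean-zero argument correctly rules out the almost periodic leading term being negative somewhere, but the real danger is that this term (mean $A/(m-1)!>0$ included) can \emph{vanish} along an arithmetic progression, on which the coefficients stay positive but are exponentially smaller than $\rho^{-n}$. Mere positivity of the $a_n$ cannot exclude this, so as written the proof does not deliver the limit.

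To close the gap you need a quantitative link between coefficients at nearby indices, not just positivity. One clean fix: prove the monotonicity $\op_{n,k}(p)\leq\op_{n+1,k}(p)$ by sending $\pi\in\OP_{n,k}$ to the partition obtained by inserting $n+1$ into the block containing $n$ (equivalently, duplicating the last letter of $w(\pi)$; an occurrence of the permutation pattern $p^{-1}$ uses distinct letters, hence at most one of the two equal final positions, and any occurrence using the new position can be shifted back to the old one). Since the almost periodic leading term has positive mean, the set of $n$ with $a_n\geq c\,n^{m-1}\rho^{-n}$ has bounded gaps, and monotonicity then propagates this lower bound to all large $n$, giving $\liminf a_n^{1/n}\geq 1/\rho$. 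Alternatively --- and this is how the statement really follows from the material in the paper --- the limit is immediate from the exact asymptotics recorded right afterwards: $\op_{n,k}(p)=k!\,S(n,k)$ for $k<|p|$ by~\eqref{eq:SmallPattern}, so the limit is $k$, and $\op_{n,k}(p)\sim C\,n^{M}(|p|-1)^n$ for $k\geq|p|$ by Proposition~\ref{prop:OPvsWORDS} and Theorem~\ref{thm:BraMan}, so the limit is $|p|-1$.
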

 It is well-known that  for every positive $k$, $S(n,k)^{\frac{1}{n}}$ tends to $k$ as $n$ tends to infinity.
 This, combined with~\eqref{eq:SmallPattern}, implies that for every
 pattern $p$
 \begin{align}\label{eq:growth-SmallPattern}
 \underset{n\to\infty}\lim \left(\op_{n,k}(p)\right)^{\frac{1}{n}}=k\qquad(1\leq k<|p|).
 \end{align}
From a result of Br\"{a}nd\'en and Mansour, we can also determine
the limit in Theorem~\ref{thm:God} when $k\geq |p|$.

\begin{thm}\label{thm:Growth}
  For every pattern $p$, we have
  \begin{align}\label{eq:growth-pattern}
 \underset{n\to\infty}\lim \left(\op_{n,k}(p)\right)^{\frac{1}{n}}=|p|-1\qquad( k\geq |p|).
 \end{align}
 \end{thm}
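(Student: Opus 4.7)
My plan is to derive Theorem~\ref{thm:Growth} from the identity $\op_{n,k}(p) = |SW_{n,[k]}(p^{-1})|$ of~\eqref{eq:OPvsSurjWORDS} together with the Br\"{a}nd\'en--Mansour growth-rate theorem for words, which asserts that $\lim_n |[j]^n(q)|^{1/n} = |q|-1$ for every pattern $q$ and every $j \geq |q|$. Throughout I write $m = |p| = |p^{-1}|$ and $q = p^{-1}$; the case $m=1$ is trivial since $\op_{n,k}(1)=0$, so I assume $m \geq 2$.

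The upper bound is the easy step. Since $SW_{n,[k]}(q) \subseteq [k]^n(q)$, the relation~\eqref{eq:OPvsSurjWORDS} immediately gives $\op_{n,k}(p) \leq |[k]^n(q)|$, so the Br\"{a}nd\'en--Mansour estimate yields $\limsup_n \op_{n,k}(p)^{1/n} \leq m-1$ for every $k \geq m$.

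For the lower bound I would construct an explicit family of surjective $q$-avoiding words in $[k]^n$. Set $n' = n - (k-m+1)$. When $p(m) \neq m$, I use words of the form
$$w = v_1 v_2 \cdots v_{n'} \cdot m\,(m+1)\cdots k,$$
and when $p(m) = m$, I use instead words of the form
$$w = k\,(k-1)\cdots m \cdot v_1 v_2 \cdots v_{n'},$$
where in either case $v$ ranges over $SW_{n',[m-1]}$. Every such $w$ is manifestly surjective on $[k]$. A short case analysis on how many letters a hypothetical occurrence of $q$ in $w$ takes from the fixed block of ``large'' letters shows that, in the first construction, such an occurrence would force $q$ to fix its last entry (equivalently $p(m) = m$), and in the second construction, it would force $q(1) = m$ (equivalently $p(m) = 1$). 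These are excluded by hypothesis, and since $m \geq 2$ they cannot both fail, so one of the two constructions is always valid. This yields $\op_{n,k}(p) \geq |SW_{n',[m-1]}|$, and since $|SW_{n',[m-1]}| = (m-1)!\,S(n',m-1)$ satisfies $|SW_{n',[m-1]}|^{1/n'} \to m-1$ as $n' \to \infty$, I conclude $\liminf_n \op_{n,k}(p)^{1/n} \geq m-1$.

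The main obstacle is precisely this lower bound. The natural route via the alternating identity of Proposition~\ref{prop:OPvsWORDS} produces several summands with the same candidate growth rate $m-1$, and ruling out cancellation among them would require residue analysis of the rational generating functions produced by Br\"{a}nd\'en--Mansour. The explicit construction above, with its case split according to whether $p$ fixes its last entry, is what circumvents this difficulty.
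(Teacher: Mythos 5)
Your proof is correct, and it takes a genuinely different route from the paper's for the lower bound. The paper obtains Theorem~\ref{thm:Growth} as a consequence of the stronger assertion $\op_{n,k}(p)\sim|[k]^n(p^{-1})|\sim C\,n^M(|p|-1)^n$, which it declares ``immediate'' from the alternating-sum identity of Proposition~\ref{prop:OPvsWORDS} together with Theorem~\ref{thm:BraMan}; implicitly this uses that the $j=k$ summand dominates the inclusion--exclusion sum (e.g.\ because the polynomial degree $M$ in the Br\"and\'en--Mansour asymptotics increases strictly in the alphabet size, as it visibly does in Regev's formula~\eqref{eq:Asympt-Reg}), a point the paper does not spell out and that you rightly flag as the nontrivial step of that route. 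You instead sandwich $|SW_{n-(k-|p|+1),[|p|-1]}|\leq\op_{n,k}(p)\leq|[k]^n(p^{-1})|$: the upper bound still invokes Br\"and\'en--Mansour (that part is unavoidable), but the lower bound comes from an explicit injection $v\mapsto v\cdot m(m{+}1)\cdots k$ or $v\mapsto k(k{-}1)\cdots m\cdot v$ from surjective words on $[|p|-1]$ letters, and I have checked your case analysis: an occurrence of $q=p^{-1}$ must use at least one letter of the appended monotone block (since $v$ has only $|p|-1$ distinct letters), which forces $q(|p|)=|p|$ in the first construction and $q(1)=|p|$ in the second, i.e.\ $p(|p|)=|p|$ resp.\ $p(|p|)=1$, and these cannot both hold for $|p|\geq2$. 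What you lose is the refined asymptotics $\op_{n,k}(p)\sim C\,n^M(|p|-1)^n$ that the paper's approach delivers; what you gain is an elementary, cancellation-free lower bound that makes the growth-rate statement self-contained modulo only the upper bound from Theorem~\ref{thm:BraMan}.
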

 Actually, we can derive a stronger result. Suppose $p\in\Sy_d$ and let $k\geq d$ be given. Then there are a constant
$C>0$ and a nonnegative integer $M$ such that
\begin{align*}
 \op_{n,k}(p) \sim |[k]^n(p^{-1})|\sim C\, n^{M}(d-1)^n\qquad(n\to\infty).
\end{align*}
This result, which easily leads to Theorem~\ref{thm:Growth}, is
immediate from Proposition~\ref{prop:OPvsWORDS} and the next result
of Br\"{a}nd\'en and Mansour~\cite[Theorem~3.2]{BrMa}.

 \begin{thm}[Br\"{a}nd\'en and Mansour]\label{thm:BraMan}\
Suppose $p\in\Sy_d$ and let $k\geq d$ be given. Then there are a
constant $C>0$ and a nonnegative integer $M$ such that
\begin{align}\label{eq:asympt-BrMa}
 |[k]^n(p)|\sim C\, n^{M}(d-1)^n\qquad(n\to\infty).
\end{align}
 \end{thm}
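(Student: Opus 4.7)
The plan is to combine the rationality of the generating function $F(x) = \sum_{n\geq 0}|[k]^n(p)|\,x^n$ established in Theorem~\ref{thm:Cor-BraMan} with matched lower and upper bounds on $a_n := |[k]^n(p)|$. Rationality provides, for $n$ sufficiently large, a representation $a_n = \sum_{j=1}^s P_j(n)\,\alpha_j^n$, where the $\alpha_j$ are the distinct reciprocals of the poles of $F$ and the $P_j$ are nonzero polynomials. The asserted asymptotic $a_n \sim Cn^M(d-1)^n$ amounts to showing that $\max_j|\alpha_j| = d-1$, that this maximum is attained by the real positive root $\alpha_j = d-1$ itself (ruling out cancellation with conjugate dominant terms), and that the corresponding polynomial $P_j$ has degree~$M$.

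For the lower bound, any word over the subalphabet $\{1,2,\ldots,d-1\}\subseteq [k]$ automatically avoids the length-$d$ pattern $p$, so $a_n \geq (d-1)^n$ and hence $\max_j |\alpha_j|\geq d-1$. For the upper bound---the main technical step---I would recast the count as a transfer-matrix model: construct a deterministic finite automaton whose states encode the family of order-types of proper prefixes of $p$ realized as subsequences of the input scanned so far. This state space is finite (bounded by a function of $k$ and $d$), the accepting states are those in which no full occurrence of $p$ has yet been recorded, and $a_n = \mathbf{u}^{\top}T^n\mathbf{v}$ for a nonnegative transfer matrix $T$. By Perron--Frobenius, $\max_j |\alpha_j| = \rho(T)$, and I would establish $\rho(T)\leq d-1$ by induction on $k\geq d$: for the base $k=d$, the spectral radius is read off from direct analysis of the (small) automaton, while the inductive step exploits the fact that inserting copies of the new largest letter $k+1$ into a $p$-avoiding word over $[k]$ is constrained by the shape of $p$, yielding an explicit contraction between the state spaces of the $[k+1]$- and $[k]$-automata that preserves the Perron value.

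The main obstacle is precisely this upper bound. Naive approaches---such as erasing all occurrences of the largest letter and bounding $a_n$ by $\sum_m \binom{n}{m}|[k-1]^m(p)|$---inflate the estimate to $d^n$ rather than $(d-1)^n$. Avoiding the inflation requires the finer transfer-matrix analysis sketched above (or a direct combinatorial injection into a family of objects of known growth rate $d-1$). Once the matched bounds pin $\rho(T) = d-1$ and one checks that $T$ is primitive (some power of $T$ is strictly positive), the Perron eigenvalue $d-1$ is simple and strictly dominant, and the standard asymptotic extraction for coefficients of rational generating functions delivers $a_n \sim Cn^M(d-1)^n$ with $C>0$ and $M\geq 0$.
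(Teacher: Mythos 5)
The paper does not prove this statement at all: it is quoted verbatim as Theorem~3.2 of Br\"and\'en and Mansour \cite{BrMa}, so there is no internal proof to compare against. Judged on its own merits, your sketch correctly identifies the easy half (any word over the subalphabet $\{1,\dots,d-1\}$ avoids $p$ because an occurrence of a length-$d$ permutation pattern requires $d$ distinct letter values, giving $|[k]^n(p)|\ge (d-1)^n$), and the automaton/transfer-matrix framework is indeed the one used in the cited source. But the proof has a genuine gap exactly where you flag ``the main technical step'': the upper bound $\rho(T)\le d-1$. Everything you say about it --- ``direct analysis of the (small) automaton'' for $k=d$, and an ``explicit contraction between the state spaces of the $[k+1]$- and $[k]$-automata that preserves the Perron value'' --- is a description of what a proof would have to accomplish, not an argument. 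No such contraction is exhibited, and it is not at all clear one exists: the number of states grows with $k$, and the whole content of the theorem is that the growth rate nevertheless stabilizes at $d-1$ for all $k\ge d$. You have in effect restated the theorem as a claim about spectral radii and left that claim unproved.

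There is a second, smaller gap in the extraction of the asymptotics. The transfer matrix of a pattern-avoidance automaton is typically reducible (it has dead/absorbing states and transient states), so Perron--Frobenius in the primitive form you invoke does not apply to $T$ itself; one must argue about the dominant strongly connected components. Nor does the lower bound $a_n\ge(d-1)^n$ by itself exclude oscillation: a sequence such as $2\cdot 3^n+3^n\cos(n\theta)$ exceeds $3^n$ yet is not asymptotic to $C\,3^n$. So even granting $\rho(T)=d-1$, you still owe an argument that the modulus-$(d-1)$ eigenvalues other than $d-1$ contribute only lower-order polynomial factors, and that the resulting constant $C$ is strictly positive. In short, the skeleton is the right one, but the two load-bearing steps (the spectral radius bound and the dominance/positivity analysis) are asserted rather than proved.
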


   Godbole et al.~\cite[Section~7]{Godbole} were also interested in the variation of the
 sequences $\left(\op_{n,k}(p)\right)_{k=1..n}$. They noticed that theses sequences are, in general, not monotone in $k$
 but they made the following conjecture that we present here in a slightly different (but equivalent) form.
 \begin{conj}[Godbole et al.]\label{conj:God}
For each pattern $p$ and each integer $k$, there exists a positive
integer $n_0(k,p)$ such that for $n\geq n_0(k,p)$,
$$\op_{n,k+1}(p)>\op_{n,k}(p)>\cdots>\op_{n,|p|}(p).$$
 \end{conj}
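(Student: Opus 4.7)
The plan is to reduce the conjecture to the comparison of asymptotic expansions supplied by Proposition~\ref{prop:OPvsWORDS} and Theorem~\ref{thm:BraMan}. First I would use Proposition~\ref{prop:OPvsWORDS} to decompose
\[\op_{n,k}(p)=\sum_{j=1}^{k}\binom{k}{j}(-1)^{k-j}|[j]^n(p^{-1})|,\]
splitting the sum at $j=d:=|p|$. For $j<d$, every word in $[j]^n$ trivially avoids a pattern of length $d$, so $|[j]^n(p^{-1})|=j^n$. For $j\ge d$, Theorem~\ref{thm:BraMan} furnishes an asymptotic expansion $|[j]^n(p^{-1})|\sim C_j\, n^{M_j}(d-1)^n$ with $C_j>0$ and $M_j$ a nonnegative integer. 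Putting the pieces together, one obtains an expansion $\op_{n,k}(p)\sim A_k\, n^{\alpha_k}(d-1)^n$ for every $k\ge d$, for some positive $A_k$ and nonnegative integer $\alpha_k$ depending on $p$ and $k$.

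Then, for each $\ell\ge d$, I would reduce the inequality $\op_{n,\ell+1}(p)>\op_{n,\ell}(p)$ (for $n$ large) to the lexicographic comparison $(\alpha_{\ell+1},A_{\ell+1})>(\alpha_\ell,A_\ell)$. The point is that two expressions of the form $A\,n^\alpha (d-1)^n$ and $A'\,n^{\alpha'}(d-1)^n$ sharing the same exponential rate are eventually ordered exactly by the lexicographic order on the pair (exponent, leading coefficient). Once this step is carried out for $\ell=d,d+1,\ldots,k$, it suffices to take $n_0(k,p)$ to be the maximum of the finitely many thresholds it produces; the whole chain $\op_{n,k+1}(p)>\op_{n,k}(p)>\cdots>\op_{n,|p|}(p)$ then holds for every $n\ge n_0(k,p)$.

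The main obstacle lies in the lexicographic comparison above: Theorem~\ref{thm:BraMan} guarantees the existence of the constants $C_j,M_j$ but is not explicit enough to read off how $\alpha_k$ and $A_k$ depend on $k$. Two things could in principle go wrong. Firstly, after combining the summands via inclusion--exclusion, the would-be dominant coefficient $A_k$ could \emph{a priori} degenerate, forcing one to descend to smaller-order terms; ruling this out requires a finer analysis of the polynomial coefficients in the Brändén--Mansour asymptotic. Secondly, even with positivity in hand, showing $\alpha_{k+1}\ge\alpha_k$ (with $A_{k+1}>A_k$ when the exponents coincide) seems to need pattern-specific input, since the growth exponents $M_j$ are not monotone in $j$ in any evident way. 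This is presumably why the conjecture can only be \emph{partially} confirmed by the present methods; the plan can nevertheless be pushed through in favourable special cases, most notably $p=321$, where the refined enumerative data established in Section~3 feeds directly into the comparison step.
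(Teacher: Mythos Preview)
The statement is a \emph{conjecture}; the paper does not prove it in general but only for the increasing patterns $p_\ell=12\cdots(\ell+1)$ (and, by an easy equivalence, also $\widetilde{p_\ell}$). Your outline correctly isolates the obstruction that prevents a general proof: Theorem~\ref{thm:BraMan} does not give the exponents $M_j$ explicitly, so the lexicographic comparison $(\alpha_{k+1},A_{k+1})>(\alpha_k,A_k)$ cannot be carried out for arbitrary $p$. That is the genuine gap, and you are right to flag it.

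Where your proposal diverges from the paper is in the remedy for special cases. You suggest salvaging the case $p=321$ via the detailed enumerative formulas of Section~3. The paper instead invokes Regev's Theorem~\ref{thm:Reg}, which supplies the \emph{explicit} asymptotic $|[k]^n(p_\ell)|\sim C_k\,n^{\ell(k-\ell)}\ell^n$ with a concrete positive constant $C_k$. Combined with Proposition~\ref{prop:OPvsWORDS}, this gives $\op_{n,k}(p_\ell)\sim C_k\,n^{\ell(k-\ell)}\ell^n$, and since the polynomial degree $\ell(k-\ell)$ is strictly increasing in $k$ for $k\ge\ell$, one gets the much stronger conclusion $\op_{n,k}(p_\ell)/\op_{n,k+1}(p_\ell)\to 0$. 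This simultaneously handles \emph{all} increasing patterns, not just length~$3$ (and subsumes $p=321$ because $\op_{n,k}(321)=\op_{n,k}(123)=\op_{n,k}(p_2)$). Your Section~3 route would in principle also work for length~$3$, but it is narrower and more laborious than using Regev's ready-made asymptotic.
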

  Relying on the next result of Regev~\cite{Reg}, proved even before Theorem~\ref{thm:BraMan} was known, we can confirm this
  conjecture for the increasing patterns $p_{\ell}=1\,2\,\cdots\,(\ell+1)$, $\ell\geq 1$.
\begin{thm}[Regev]\label{thm:Reg}
For $k \geq \ell\geq 1$, we have
\begin{align}\label{eq:Asympt-Reg}
 |[k]^n(p_{\ell})|\sim  \left(\frac{1}{\ell}\right)^{\ell(k-\ell)}
 \left(\prod_{i=0}^{\ell-1}\frac{i!}{(k-1-i)!}\right)\, n^{\ell(k-\ell)}\,\ell^n\qquad(n\to\infty).
\end{align}
 \end{thm}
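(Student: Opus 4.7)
The plan is to reduce $|[k]^n(p_\ell)|$ to a finite sum of products of tableau counts via RSK, apply closed product formulas for these counts, and then perform a saddle-point style asymptotic analysis.

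First I would rewrite $|[k]^n(p_\ell)|$ as a sum indexed by partitions. Replacing each letter $a$ of a word $w\in[k]^n$ by $k+1-a$ turns $p_\ell$-avoidance into avoidance of the strictly decreasing pattern $(\ell+1)\,\ell\,\cdots\,1$, so $|[k]^n(p_\ell)|$ equals the number of words in $[k]^n$ whose longest strictly decreasing subsequence has length at most~$\ell$. The standard RSK correspondence for words places these in bijection with pairs $(P,Q)$ of a common shape $\lambda\vdash n$, where $P$ is a column-strict tableau with entries in $[k]$ and $Q$ is standard; Schensted's theorem identifies the length of the first column of $\lambda$ with the longest strictly decreasing subsequence. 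The constraint thus becomes ``$\lambda$ has at most $\ell$ parts'' and
\begin{align*}
|[k]^n(p_\ell)| = \sum_{\substack{\lambda\vdash n\\ \lambda'_1\leq\ell}} f^\lambda\, s_\lambda(1^k),
\end{align*}
where $f^\lambda$ is the number of standard Young tableaux of shape $\lambda$ and $s_\lambda(1^k)$ the number of column-strict tableaux of shape $\lambda$ with entries in $[k]$.

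Second, I would invoke the hook-length formula for $f^\lambda$ together with Weyl's dimension formula
\begin{align*}
s_\lambda(1^k) = \prod_{1\leq i<j\leq k}\frac{\mu_i-\mu_j}{j-i}, \qquad \mu_i=\lambda_i+k-i,
\end{align*}
after padding $\lambda$ with zeros to $k$ parts. The product splits into three blocks according to whether $i,j\leq\ell$, $i\leq\ell<j$, or $\ell<i<j$. On a balanced shape with each $\lambda_i$ close to $n/\ell$, the middle block contains $\ell(k-\ell)$ factors each of size $\Theta(n)$, producing the polynomial factor $n^{\ell(k-\ell)}$; the third block is a pure constant; and the first block combines with the Vandermonde part of the hook-length formula for $f^\lambda$.

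Third, I would carry out the asymptotic analysis. Substituting $\lambda_i = n/\ell + t_i\sqrt{n}$ with $t_1\geq\cdots\geq t_\ell$ and $\sum t_i=0$ and applying Stirling's formula to the factorials in $f^\lambda$ converts the sum into a Riemann sum for a Gaussian-type integral against a squared Vandermonde weight. The exponential factor $\ell^n$ emerges from the multinomial content of $f^\lambda$ via a local central limit argument, the factor $n^{\ell(k-\ell)}$ comes from the middle block of Weyl's formula evaluated at the balanced shape, and the constant reduces to a Mehta-type integral.

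The main obstacle is this last step: evaluating the resulting integral in closed form and matching it with $(1/\ell)^{\ell(k-\ell)}\prod_{i=0}^{\ell-1} i!/(k-1-i)!$, together with a quantitative tail bound that forces shapes with $|\lambda_i-n/\ell|\gg\sqrt{n\log n}$ to contribute negligibly. Regev routes this computation through the character theory of $\mathrm{GL}_k$, which packages the Mehta integral into a ratio of products of factorials; a self-contained saddle-point treatment is possible but requires more bookkeeping.
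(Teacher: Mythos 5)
A preliminary remark: the paper contains no proof of this statement. It is quoted from Regev \cite{Reg} and used as a black box (together with Proposition~\ref{prop:OPvsWORDS}) to settle Conjecture~\ref{conj:God} for increasing patterns, so your proposal can only be measured against Regev's original argument, not against anything in this paper.

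Your first two steps are correct and are indeed the standard opening of that argument: complementation, RSK and Schensted's theorem give
\begin{equation*}
|[k]^n(p_\ell)|=\sum_{\lambda\vdash n,\ \lambda_1'\le \ell} f^\lambda\, s_\lambda(1^k),
\end{equation*}
and Weyl's dimension formula applies since $\ell\le k$. The gap is the one you flag yourself: as written you neither evaluate the constant nor prove the tail bound, and these are the entire content of the theorem. Both are easier than your sketch suggests, and in particular no Mehta-type integral is needed. Since $\lambda_j=0$ for $j>\ell$, Weyl's formula factors as $s_\lambda(1^k)=s_\lambda(1^\ell)\prod_{i=1}^{\ell}\prod_{j=\ell+1}^{k}\frac{\lambda_i+j-i}{j-i}$, while RSK applied to all of $[\ell]^n$ gives the exact identity $\sum_{\lambda\vdash n,\ \lambda_1'\le\ell} f^\lambda s_\lambda(1^\ell)=\ell^n$. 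Hence $|[k]^n(p_\ell)|/\ell^n$ is the expectation, under the probability weights $f^\lambda s_\lambda(1^\ell)/\ell^n$ (the Schur--Weyl measure), of a polynomial of degree $\ell(k-\ell)$ in the $\lambda_i$ whose value near the balanced point $\lambda_i=n/\ell$ is asymptotic to $(n/\ell)^{\ell(k-\ell)}\prod_{i=1}^{\ell}\prod_{j=\ell+1}^{k}(j-i)^{-1}$; a short reindexing shows $\prod_{i=1}^{\ell}\prod_{j=\ell+1}^{k}(j-i)=\prod_{i=0}^{\ell-1}(k-1-i)!/i!$, so the constant in \eqref{eq:Asympt-Reg} is exactly the reciprocal of these ``middle block'' denominators and the Gaussian/Vandermonde integral you worry about cancels identically against the normalization $\ell^n$. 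For the upper bound, AM--GM gives $\prod_{i}\prod_{j}(\lambda_i+j-i)\le\bigl((n+\ell(k-1))/\ell\bigr)^{\ell(k-\ell)}$ uniformly over all admissible $\lambda$, so that direction is free. For the matching lower bound one only needs $\lambda_i/n\to1/\ell$ in probability, which follows from the elementary bounds $f^\lambda\le\binom{n}{\lambda_1,\dots,\lambda_\ell}$ and $s_\lambda(1^\ell)\le n^{\binom{\ell}{2}}$ plus the entropy estimate for multinomial coefficients: shapes with some $|\lambda_i/n-1/\ell|\ge\epsilon$ contribute $O(\ell^n e^{-c\epsilon^2 n} n^{O(1)})$, negligible even after multiplication by $n^{\ell(k-\ell)}$. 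With these two observations supplied your outline closes up into a complete proof; without them it is an accurate road map but not yet an argument.
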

 Let $k$ and $\ell$ be fixed integer with $k\geq\ell\geq 1$. Then the preceding result, in conjunction with Proposition~\ref{prop:OPvsWORDS},
 shows that $\op_{n,k}(p_{\ell})$ is, as $n$ tends to infinity, equivalent to the right-hand side of~\eqref{eq:Asympt-Reg}.  It is then a routine
 matter to show  that $\op_{n,k}(p_{\ell})/\op_{n,k+1}(p_{\ell})$ tends to 0 as $n$ tends to infinity, which in turn, obviously yields the next result.
\begin{thm}
Conjecture~\ref{conj:God} is true for the patterns
$p_{\ell}=1\,2\,\cdots\,(\ell+1)$, $\ell\geq 1$.
 \end{thm}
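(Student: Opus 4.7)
The plan is to combine Proposition~\ref{prop:OPvsWORDS} with Regev's asymptotic (Theorem~\ref{thm:Reg}) to extract sharp asymptotics for $\op_{n,k}(p_\ell)$ and then read off the desired chain of inequalities termwise. The first observation is that the increasing pattern $p_\ell$ is its own inverse, so $p_\ell^{-1}=p_\ell$; substituting into~\eqref{eq:OPvsWO} gives
$$\op_{n,k}(p_\ell)=\sum_{j=1}^{k}\binom{k}{j}(-1)^{k-j}\,|[j]^n(p_\ell)|.$$

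Next I would estimate each summand separately. For $j<\ell+1=|p_\ell|$ the pattern cannot fit in a $j$-letter alphabet, so $|[j]^n(p_\ell)|=j^n$; for $j\geq\ell$, Theorem~\ref{thm:Reg} supplies $|[j]^n(p_\ell)|\sim C_{j,\ell}\,\ell^{n}\,n^{\ell(j-\ell)}$, where
$$C_{j,\ell}=\left(\frac{1}{\ell}\right)^{\ell(j-\ell)}\prod_{i=0}^{\ell-1}\frac{i!}{(j-1-i)!}.$$
In the alternating sum, the terms with $j<\ell$ are of order $j^n=o(\ell^n)$, while the terms with $\ell\leq j\leq k$ share the exponential rate $\ell^n$ but carry polynomial factors $n^{\ell(j-\ell)}$ strictly increasing in $j$. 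Hence the $j=k$ term swamps all the others, and I obtain
$$\op_{n,k}(p_\ell)\sim C_{k,\ell}\,\ell^{n}\,n^{\ell(k-\ell)}\qquad(n\to\infty)$$
whenever $k\geq\ell$.

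From here the conclusion is immediate: for any $k\geq |p_\ell|=\ell+1$,
$$\frac{\op_{n,k+1}(p_\ell)}{\op_{n,k}(p_\ell)}\sim\frac{C_{k+1,\ell}}{C_{k,\ell}}\,n^{\ell}\longrightarrow\infty,$$
so there is a threshold $N(k)$ beyond which $\op_{n,k+1}(p_\ell)>\op_{n,k}(p_\ell)$. Setting $n_0(k,p_\ell):=\max\{N(j):|p_\ell|\leq j\leq k\}$ then splices the pairwise inequalities into the full chain required by Conjecture~\ref{conj:God}. The only point demanding care is showing that the $j=k$ term truly dominates the inclusion-exclusion sum rather than being killed by sign cancellations, but because Regev's theorem pins down both the exponential base $\ell$ and the polynomial degree $\ell(j-\ell)$ explicitly, this comparison of growth rates is just bookkeeping.
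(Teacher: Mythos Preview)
Your proposal is correct and follows essentially the same route as the paper: combine Proposition~\ref{prop:OPvsWORDS} with Regev's asymptotic to show that $\op_{n,k}(p_\ell)\sim |[k]^n(p_\ell)|$, then compare consecutive values of $k$ via the explicit polynomial degree $\ell(k-\ell)$. The paper is terser (it simply asserts the asymptotic equivalence and that $\op_{n,k}(p_\ell)/\op_{n,k+1}(p_\ell)\to 0$), but your explicit justification that the $j=k$ term in the inclusion--exclusion sum carries the unique top polynomial degree and a positive sign is exactly the bookkeeping the paper leaves implicit.
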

 We should also notice that, by Theorem~4.8 in~\cite{BrMa}, we have $|[k]^n(p_{\ell})|=|[k]^n(\widetilde{p_{\ell}})|$ for all $n,k\geq 1$,
 where $\widetilde{p_{\ell}}=1\,2\,\cdots\,(\ell-1)\,\ell\,(\ell+1)$. This, combined with Proposition~\ref{prop:OPvsWORDS} and the
 preceding result, shows that  Conjecture~\ref{conj:God} is also true for the patterns $\widetilde{p_{\ell}}$, $\ell\geq 1$.

\section{The number of ordered set partitions avoiding a pattern of length 3}

Recall that $\op_{n,k}(p)$ is the same for all $p\in\Sy_3$, as
previously noticed. It is convenient to set for all integers $n$ and
$k$
 \begin{align}\label{eq:op321-IniVal}
  \op_{n,k}(321)=\left\{
  \begin{array}{ll}
    0, & \hbox{if $n<k$ or $k<0$ or $n<0$;} \\
    \delta_{n,0}, & \hbox{if $k=0$.}
  \end{array}
\right.
\end{align}
By~\eqref{eq:SmallPattern} and from what we have recalled in the
introduction, we have
 \begin{align}\label{eq:op321-k leq 2}
  \op_{n,1}(321)=1\qquad\text{and}\qquad\op_{n,2}(321)=2^n-2\qquad(n\geq1),
\end{align}
and
 \begin{align}\label{eq:op321-k=n}
  \op_{n,n}(321)=\frac{1}{n+1}\binom{2n}{n}\qquad(n\geq1).
\end{align}
Formulas for $\op_{n,3}(p)$ and $\op_{n,n-1}(p)$  were obtained by
Godbole et al.~\cite[Sections~2 and 3]{Godbole}.
\begin{thm}[Godbole et al.] For all $n\geq1$,
 \begin{align}
  \op_{n,3}(321)&=(n^2+3n-16)2^{n-3}+3,\label{eq:Godbole-k=3}\\
  \op_{n,n-1}(321)&= \frac {3(n-1)^2}{n(n+1)}\binom{2n-2}{n-1}\label{eq:Godbole-conjecture}.
 \end{align}
 \end{thm}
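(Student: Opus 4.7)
For $\op_{n,3}(321)$, I would apply Proposition~\ref{prop:OPvsWORDS} with $k=3$. Since $321^{-1}=321$, this yields
\begin{equation*}
\op_{n,3}(321) = |[3]^n(321)| - 3\,|[2]^n(321)| + 3\,|[1]^n(321)|.
\end{equation*}
The terms $|[1]^n(321)|=1$ and $|[2]^n(321)|=2^n$ are immediate, since a binary word cannot contain a length-$3$ pattern on three distinct letters. The work is to compute $|[3]^n(321)|$, the number of words $w \in \{1,2,3\}^n$ with no subsequence $3\cdot 2\cdot 1$. I would split on whether $w$ contains a $3\cdot 2$ subsequence. Words avoiding $3\cdot 2$ are exactly those in which every $2$ precedes every $3$; conditioning on the position of the first $3$ (or its absence) gives $(n+2)\,2^{n-1}$ of them. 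A word that does contain $3\cdot 2$ avoids $3\cdot 2\cdot 1$ iff no $1$ appears after the position $j$ of the earliest $2$ preceded by a $3$; conditioning on $j$ and counting the legal prefix (a $3\cdot 2$-avoiding word of length $j-1$ containing at least one $3$, of which there are $(j-1)\,2^{j-2}$) and suffix (any word on $\{2,3\}$ of length $n-j$) produces $n(n-1)\,2^{n-3}$ in total. Adding gives $|[3]^n(321)| = 2^{n-3}(n^2+3n+8)$, and substitution yields the claimed identity.

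For $\op_{n,n-1}(321)$ I would argue structurally. An ordered partition of $[n]$ into $n-1$ blocks has exactly one doubleton $\{a,b\}$ with $a<b$ at some position $i$ and $n-2$ singletons at the remaining positions. Reading the partition left-to-right produces a permutation $\pi \in \Sy_n$ with a marked ascent at $i$, giving a bijection between $\OP_{n,n-1}$ and pairs $(\pi,i)$ with $i$ an ascent of $\pi$. The key observation is that the partition avoids $321$ iff $\pi$ does: any $321$-pattern in the partition involving both $a$ and $b$ is impossible (they occupy consecutive positions with $a<b$, so cannot jointly contribute to a decreasing triple), and thus any pattern in the partition localizes to one in $\pi$; the converse is trivial. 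Consequently
\begin{equation*}
\op_{n,n-1}(321) = \sum_{\pi \in \Sy_n(321)} \mathrm{asc}(\pi) = (n-1)C_n - D_n,\qquad D_n := \sum_{\pi \in \Sy_n(321)} \mathrm{des}(\pi).
\end{equation*}

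The remaining and main technical hurdle is to show $D_n = (n-1)C_{n-1}$. My plan is to prove the sharper equidistribution statement that for every $j \in [n-1]$ the number of $\pi \in \Sy_n(321)$ with a descent at $j$ equals $C_{n-1}$. The case $j=1$ is transparent: a descent at position $1$ in a $321$-avoiding $\pi$ forces $\pi_2=1$ (otherwise $\pi_1,\pi_2$ and the position of the value $1$ form a $321$-pattern), so removing $\pi_2$ with standardization is a clean bijection onto $\Sy_{n-1}(321)$. For general $j$ the cleanest route is probably to transport descents through a Krattenthaler-type bijection between $\Sy_n(321)$ and Dyck paths of length $2n$, under which a descent at a fixed position acquires a combinatorial meaning amenable to direct enumeration. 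Once $D_n = (n-1)C_{n-1}$ is established, a brief manipulation using $C_n = \frac{2(2n-1)}{n+1}C_{n-1}$ gives
\begin{equation*}
\op_{n,n-1}(321) = (n-1)(C_n - C_{n-1}) = \frac{3(n-1)^2}{n+1}\,C_{n-1} = \frac{3(n-1)^2}{n(n+1)}\binom{2n-2}{n-1},
\end{equation*}
as claimed.
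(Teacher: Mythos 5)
Your derivation of~\eqref{eq:Godbole-k=3} is correct and complete. You start from Proposition~\ref{prop:OPvsWORDS} with $k=3$ exactly as the paper's framework suggests, but instead of invoking Burstein's formula you count $|[3]^n(321)|$ directly, splitting on whether the word contains a $3\,2$; both counts, $(n+2)2^{n-1}$ for the $3\,2$-avoiders and $n(n-1)2^{n-3}$ for the rest, check out, giving $|[3]^n(321)|=(n^2+3n+8)2^{n-3}$ and hence the stated formula after subtracting $3\cdot 2^n-3$. This is a self-contained elementary alternative to the paper's route, which obtains~\eqref{eq:Godbole-k=3} as the case $k=3$ of the general formula in Theorem~\ref{thm:AvOP-length3}.

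The second half has a genuine gap. Your reduction is sound: the correspondence between $\OP_{n,n-1}$ and pairs $(\pi,i)$ with $i$ an ascent of $\pi\in\Sy_n$ does preserve $321$-avoidance (a decreasing triple in $\pi$ cannot use both entries of the doubleton, since those two entries form an ascent at consecutive positions), so $\op_{n,n-1}(321)=\sum_{\pi}\mathrm{asc}(\pi)=(n-1)C_n-D_n$ with the sum over $321$-avoiding $\pi$, and the final algebra from $D_n=(n-1)C_{n-1}$ to~\eqref{eq:Godbole-conjecture} is correct. But the identity $D_n=(n-1)C_{n-1}$ --- equivalently your equidistribution claim that for every $j$ exactly $C_{n-1}$ of the $321$-avoiding permutations of $[n]$ have a descent at position $j$ --- is precisely the nontrivial content, and you do not prove it: you settle $j=1$ (where the descent forces $\pi_2=1$) and then only announce that a Dyck-path bijection is ``probably'' the cleanest route for general $j$. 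The claim is true, but the $j=1$ argument does not extend verbatim to interior positions, and as written this part is a plan rather than a proof. For comparison, the paper establishes~\eqref{eq:Godbole-conjecture} by entirely different means: it specializes the two-variable recurrence~\eqref{eq:rec-Yr}, derived from the generating function of Theorem~\ref{thm:AvPartitions-GF}, to $r=1$ (equation~\eqref{eq:Y1}), and the appendix gives an alternative hypergeometric, telescoping derivation from Theorem~\ref{thm:AvOP-length3}. To complete your argument you must actually supply the bijection, or some other proof, for the descent equidistribution at an arbitrary position.
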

 The nontrivial question of determining the numbers $\op_{n,k}(321)$
for general $n$ and $k$ was raised in~\cite{Godbole}. One of the
main goal of the present section is to establish several properties
of the array $[\op_{n,k}(321)]_{n\geq k\geq1}$ that can be easily
used to extend Table~\ref{table:1}.

In Section~3.1, we recall the generating function of the array
$[|[k]^n(321)|]_{n, k\geq1}$ and the double sum formula for the
numbers $|[k]^n(321)|$, as was obtained by Burstein in~\cite{Bur}.
By simplifying Burstein's formula, we show that $|[k]^n(321)|$ can
actually be written as a single sum. This leads to an explicit
formula for the numbers $\op_{n,k}(321)$ in terms of a double sum.
The formula, which is given in Section~3.2, shows that the second
equation in~\eqref{eq:op321-k leq 2} and~\eqref{eq:Godbole-k=3} are
just instances of a general phenomenon: For each fixed $k\geq2$,
$\op_{n,k}(321)=P_k(n)2^{n}+(-1)^{k-1}k$ for some polynomial
$P_k(n)$ in $n$ of degree $2k-4$.  We also derive there the ordinary
generating function of the array $[\op_{n,k}(321)]_{n\geq k\geq1}$,
which was also obtained by Chen et al.~\cite{Chen}, and we give
several recurrences satisfied by the entries of this array. In
Section~3.3, we indicate another remarkable phenomenon of
which~\eqref{eq:op321-k=n} and~\eqref{eq:Godbole-conjecture} are
just instances: For each fixed $r\geq 0$,
$\op_{n+r,n}(p)=Q_r(n)(2n)!/n!/(n+r+1)!$ for some polynomial
$Q_r(n)$ in $n$ of degree less than or equal to $2r$. In the
appendix at the end of this paper,  we reproduce
 a calculation of Krattenthaler~\cite{Kratt} for proving~\eqref{eq:Godbole-conjecture}
 directly  from our double sum formula.

\subsection{The number of words avoiding a pattern of length 3}

The results of Burstein~\cite[Chapter~3]{Bur} on which we shall rely
in Section~3.2 are collected in the next result.
\begin{thm}[Burstein]\label{thm:Burstein-GFandFor}
For any pattern $p\in\Sy_3$,  every $k\geq2$ and every $n\geq0$,
there holds
\begin{align}\label{eq:Burstein}
|[k]^n(p)|=2^{n-2(k-2)}\sum_{j=0}^{k-2}\sum_{i=j}^{k-2}
\frac{1}{i+1}\binom{2i}{i}\binom{2(k-2-i)}{k-2-i}\binom{n+2j}{2j}.
\end{align}
Moreover, we have
\begin{align}\label{eq:Burstein-GF}
\sum_{n,k\geq 0}|[k]^n(p)|\,x^k
y^n=1+\frac{x}{1-y}+\frac{x^2}{(1-x)(1-2y)}\,C\left(\frac{xy(1-y)}{(1-x)(1-2y)^2}\right),
\end{align}
where $C(z):=(1-\sqrt{1-4z})/(2z)$ is the generating function for
the Catalan numbers.
\end{thm}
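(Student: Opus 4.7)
By Burstein's earlier result that $|[k]^n(p)|$ is independent of $p \in \Sy_3$, I may fix $p = 132$. The plan has two stages: first derive the generating function identity, then extract the double-sum formula from it by coefficient extraction.

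\textbf{Stage 1 (generating function).} Let $F_k(y) = \sum_{n \geq 0} |[k]^n(132)| y^n$ and $F(x,y) = \sum_{k \geq 0} F_k(y) x^k$. I would derive a functional equation for $F(x,y)$ by decomposing each non-empty 132-avoiding word $w \in [k]^*$ according to its first occurrence of the maximal letter $k$. Writing $w = u \cdot k \cdot v$ with $u \in [k-1]^*$ and $v \in [k]^*$, the 132-avoidance condition on a triple with $k$ at the middle forces every letter of $v$ to be either $k$ or at most $\min u$; equivalently, $v$ is itself a 132-avoiding word over an alphabet of cardinality $\min u + 1$ (with the convention that $v$ may use the full alphabet $[k]$ when $u$ is empty), and a short case check shows that no other 132 patterns are possible. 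To handle the coupling through $\min u$, introduce the refined series $\widetilde F_k(y,z) = \sum_w y^{|w|} z^{\min w}$ tracking the minimum letter; the decomposition then produces a triangular system whose generating function in $k$ reduces to a quadratic equation for $F(x,y)$. Selecting the branch compatible with $F_0(y) = 1$ yields the desired closed form in terms of the Catalan series $C$.

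\textbf{Stage 2 (double sum).} Starting from the generating function identity, I would extract $[x^k y^n] F(x,y)$. Using $C(z) = \sum_{i \geq 0} \tfrac{1}{i+1}\binom{2i}{i} z^i$ and expanding the argument $\tfrac{xy(1-y)}{(1-x)(1-2y)^2}$ as a power series in $x$, the coefficient of $x^k$ collapses to a sum over $i$ from $0$ to $k-2$; the central binomial factor $\binom{2(k-2-i)}{k-2-i}$ arises from convolving the $x$-expansion of $1/(1-x)^{i+1}$ against $x^{i+2}$ via a Vandermonde-type identity. Extracting the coefficient of $y^n$ from $(1-y)^i/(1-2y)^{2i+1}$ by Newton's binomial series and reindexing the inner summation with $j$ produces the factor $\binom{n+2j}{2j}$, while collecting the powers of $2$ from the $(1-2y)$-factors yields the prefactor $2^{n-2(k-2)}$.

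The main obstacle is the decomposition in Stage 1: the dependence of the allowed alphabet of $v$ on $\min u$ produces a triangular coupling among the $F_k(y)$ that forces the introduction of the minimum-tracking variable $z$ (or an equivalent refinement), and one must verify carefully that no 132-pattern other than those with $k$ in the middle is created by the factorization. Once the quadratic equation is in hand, both the solution and the coefficient extraction are, while laborious, purely algebraic.
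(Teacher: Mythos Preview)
The paper does not prove this theorem: it is quoted as a result of Burstein (taken from Chapter~3 of his thesis) and used as input for the rest of Section~3. So there is no proof in the paper to compare your proposal against; I can only assess the proposal on its own merits.

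\medskip

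\textbf{Stage 1.} Your first-occurrence-of-$k$ decomposition for $132$-avoiding words is correct as stated: writing $w=u\,k\,v$ with $u\in[k-1]^*$, the $132$-condition forces every letter of $v$ to lie in $\{1,\dots,\min u\}\cup\{k\}$, and one checks that no further cross-constraints arise. But the assertion that the resulting triangular system ``reduces to a quadratic equation for $F(x,y)$'' is doing all the work and is left unjustified. Once you track $\min u$ with an extra variable $z$, you have a trivariate series, and it is not at all clear how the $z$-dependence collapses to leave a quadratic in $F(x,y)$ alone. This is exactly the hard step, and the proposal does not indicate how it goes.

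\medskip

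\textbf{Stage 2.} Here there is a concrete error. Expanding the third term of the generating function as
\[
\sum_{i\ge0} C_i\,\frac{x^{i+2}}{(1-x)^{i+1}}\cdot\frac{y^i(1-y)^i}{(1-2y)^{2i+1}},
\]
the coefficient of $x^k$ in $x^{i+2}/(1-x)^{i+1}$ is $\binom{k-2}{i}$, \emph{not} the central binomial $\binom{2(k-2-i)}{k-2-i}$; no ``Vandermonde-type identity'' turns one into the other. If one continues with the substitution $y(1-y)=\tfrac14\bigl(1-(1-2y)^2\bigr)$ and extracts $[y^n]$, one obtains
\[
|[k]^n(p)|=2^{n}\sum_{j=0}^{k-2}\binom{n+2j}{2j}\sum_{i=j}^{k-2}\frac{(-1)^{i-j}}{4^{i}}\,C_i\binom{k-2}{i}\binom{i}{j},
\]
which is a valid formula but is not Burstein's. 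Matching it to \eqref{eq:Burstein} requires the additional (non-obvious) identity
\[
\sum_{i=j}^{k-2}\frac{(-1)^{i-j}}{4^{i}(i+1)}\binom{2i}{i}\binom{k-2}{i}\binom{i}{j}
=\frac{1}{4^{k-2}}\sum_{i=j}^{k-2}\frac{1}{i+1}\binom{2i}{i}\binom{2(k-2-i)}{k-2-i},
\]
which your outline neither states nor proves. So as written, Stage~2 does not reach the claimed double sum.
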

This result has been rediscovered, in a different form, by
Br\"{a}nd\'en and Mansour~\cite[Theorem~4.7]{BrMa}. Our first
interesting result in this section is that the double sum
in~\eqref{eq:Burstein} can be easily transformed into a single sum.
This, which does not seem to have been noticed previously, leads to
the next result.
\begin{thm}\label{thm:SimplificationBurstein}
For any pattern $p\in\Sy_3$,  every $k\geq2$ and every $n\geq0$,
there holds
\begin{align}\label{eq:SimplificationBurstein}
|[k]^n(p)|=\frac{2^{n-2(k-2)}}{k-1}\sum_{j=0}^{k-2}(2k-2j-3)\binom{2j}{j}\binom{2(k-2-j)}{k-2-j}\binom{n+2j}{2j}.
\end{align}
\end{thm}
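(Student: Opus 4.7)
The plan is to interchange the order of summation in Burstein's formula~\eqref{eq:Burstein} and reduce the claim to a Catalan-style convolution identity. Since the factor $\binom{n+2j}{2j}$ depends only on $j$, swapping the two sums gives
\begin{align*}
|[k]^n(p)|=2^{n-2(k-2)}\sum_{j=0}^{k-2}\binom{n+2j}{2j}\sum_{i=j}^{k-2}C_i\,\binom{2(k-2-i)}{k-2-i},
\end{align*}
where $C_i=\tfrac{1}{i+1}\binom{2i}{i}$ is the $i$-th Catalan number. Setting $N:=k-2$, matching this against the target~\eqref{eq:SimplificationBurstein} reduces the theorem to the partial-sum identity
\begin{align*}
\sum_{i=j}^{N}C_i\binom{2(N-i)}{N-i}=\frac{2(N-j)+1}{N+1}\binom{2j}{j}\binom{2(N-j)}{N-j}\qquad(0\leq j\leq N).
\end{align*}

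To prove this identity, I would use a telescoping argument with
\begin{align*}
\phi(i):=\frac{2(N-i)+1}{N+1}\binom{2i}{i}\binom{2(N-i)}{N-i},
\end{align*}
and downward induction on $j$. The base case $j=N$ reads $C_N=\tfrac{1}{N+1}\binom{2N}{N}$, which is the definition of $C_N$. For the inductive step it suffices to verify the one-step identity
\begin{align*}
\phi(i)-\phi(i+1)=C_i\binom{2(N-i)}{N-i}\qquad(0\leq i\leq N-1),
\end{align*}
since combined with the inductive hypothesis $\phi(j+1)=\sum_{i=j+1}^{N}C_i\binom{2(N-i)}{N-i}$ this gives the desired formula for the sum from $i=j$. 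The verification of the telescoping identity is routine: using the elementary ratios $\binom{2i+2}{i+1}=\tfrac{2(2i+1)}{i+1}\binom{2i}{i}$ and $\binom{2(N-i)-2}{N-i-1}=\tfrac{N-i}{2(2N-2i-1)}\binom{2(N-i)}{N-i}$, one factors out the common piece $\binom{2i}{i}\binom{2(N-i)}{N-i}/((N+1)(i+1))$, after which the bracket reduces to the one-line polynomial identity $(2(N-i)+1)(i+1)-(2i+1)(N-i)=N+1$.

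There is no genuine obstacle in this plan: once the double sum is reorganised, the whole proof rests on a single telescoping check. The only creative input is guessing the antiderivative $\phi$. This can be read off from the well-known $j=0$ case
\begin{align*}
\sum_{i=0}^{N}C_i\binom{2(N-i)}{N-i}=\tfrac{1}{2}\binom{2N+2}{N+1},
\end{align*}
(which is immediate from multiplying the generating functions $C(x)=\tfrac{1-\sqrt{1-4x}}{2x}$ and $\tfrac{1}{\sqrt{1-4x}}=\sum_n\binom{2n}{n}x^n$) and then refining the prefactor so that its dependence on $j$ telescopes correctly.
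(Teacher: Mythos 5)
Your proposal is correct. The reduction is the same as the paper's: pull $\binom{n+2j}{2j}$ out of the inner sum in~\eqref{eq:Burstein}, so that everything rests on evaluating the partial Catalan convolution $\sum_{i=j}^{N}C_i\binom{2(N-i)}{N-i}$ with $N=k-2$; your target identity is exactly the paper's equation~\eqref{eq:simplificationSum1} after the substitution $k\mapsto N$, since $2(N-j)+1=2k-2j-3$ and $N+1=k-1$. Where you diverge is in the proof of that identity: the paper shifts the summation index, writes the sum as a ${}_3F_2$ at argument $1$, and invokes the Pfaff--Saalsch\"utz summation, whereas you exhibit the explicit antidifference $\phi(i)=\frac{2(N-i)+1}{N+1}\binom{2i}{i}\binom{2(N-i)}{N-i}$ and telescope. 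I checked your one-step identity: with the two binomial ratios you quote, $\phi(i)-\phi(i+1)$ factors as $\binom{2i}{i}\binom{2(N-i)}{N-i}\bigl((2N-2i+1)(i+1)-(2i+1)(N-i)\bigr)/((N+1)(i+1))$, and the bracket is indeed $N+1$, giving $C_i\binom{2(N-i)}{N-i}$; the base case $\phi(N)=C_N$ is immediate. Your route is more elementary --- no hypergeometric machinery --- and, as you note, the ``guess'' of $\phi$ is not really a guess at all, since $\phi(j)$ is precisely the claimed value of the partial sum, so telescoping is the natural verification once the closed form is conjectured. The paper's Pfaff--Saalsch\"utz route has the advantage of deriving the closed form without knowing it in advance, which is how one would discover the simplification in the first place. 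One cosmetic remark: there is no actual interchange of summation needed in your first display --- the factor $\binom{n+2j}{2j}$ already sits under the outer sum over $j$ in~\eqref{eq:Burstein}, so you are merely factoring it out of the inner sum --- but this does not affect the correctness of anything that follows.
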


\begin{proof}
 The result is immediate by plugging the equation
\begin{align}\label{eq:simplificationSum1}
\sum_{i=j}^{k}
\frac{1}{i+1}\binom{2i}{i}\binom{2(k-i)}{k-i}=\frac{2k+1-2j}{k+1}\binom{2j}{j}\binom{2(k-j)}{k-j}\qquad(k\geq
j\geq 0)
\end{align}
into~\eqref{eq:Burstein}. To prove~\eqref{eq:simplificationSum1}, we
first shift the order of summation over $i$ by $j$. The left-hand
side of the resulting equation can be written using standard
hypergeometric notation as
\begin{align*}
\frac{1}{j+1}\binom{2j}{j}\binom{2(k-j)}{k-j}
{}_3F_2\left[{{1,j+\tfrac{1}{2},j-k}\atop
{j+2,j-k+\tfrac{1}{2}}};1\right].
\end{align*}
But, by Pfaff-Saalsch\"utz's ${}_3F_2$ identity, the ${}_3F_2$
series in the above expression simplifies to
\begin{align*}
{}_3F_2\left[{{1,j+\tfrac{1}{2},j-k}\atop
{j+2,j-k+\tfrac{1}{2}}};1\right]=\frac{(j+1)_{k-j}(\tfrac{3}{2})_{k-j}}{(k+1)_{k-j}(\tfrac{1}{2})_{k-j}}=\frac{(j+1)(2k-2j+1)}{k+1},
\end{align*}
where $(x)_n$ is for the Pochhammer symbol
$(x)_{n}=x(x+1)(x+2)\cdots(x+n-1)$. This ends the proof.
\end{proof}

\subsection{Explicit expression, generating function and recurrences}
We begin by noting that Proposition~\ref{prop:OPvsWORDS} can be
reformulated as follows. For a permutation $p$ consider the
generating functions
\begin{align*}
OP(x,y\,|\,p):=1+\sum_{n\geq k\geq 1}\op_{n,k}(p)\,x^k
y^n\quad\text{and}\quad W(x,y\,|\,p):=1+\sum_{n,k\geq
1}|[k]^n(p)|\,x^k y^n.
\end{align*}
Then  we have
 \begin{align}\label{eq:OPvsWORDS-genfunc}
 OP(x,y\,|\,p)&=\frac{1}{1+x}W(\frac{x}{1+x},y\,|\,p^{-1}).
 \end{align}
 To prove this, it suffices to equate coefficients of $x^{k} y^{n}$ on both
sides of~\eqref{eq:OPvsWORDS-genfunc} and then compare the resulting
equation with~\eqref{eq:OPvsWO}. We omit the details. Combining the
preceding equation with Theorem~\ref{thm:Burstein-GFandFor}, we
obtain the ordinary generating function of the array
$[\op_{n,k}(321)]_{n\geq k\geq 0}$.
\begin{thm}\label{thm:AvPartitions-GF}
We have
\begin{align}\label{eq:Burstein-GF}
\sum_{n\geq k\geq 0}\op_{n,k}(321)\,x^k
y^n=\frac{1}{1+x}+\frac{x}{(1+x)^2(1-y)}+\frac{x^2}{(1+x)^2(1-2y)}\,C\left(\frac{xy(1-y)}{(1-2y)^2}\right),
\end{align}
where $C(z)=(1-\sqrt{1-4z})/(2z)$.
\end{thm}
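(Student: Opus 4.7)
The plan is to directly combine the generating-function reformulation \eqref{eq:OPvsWORDS-genfunc} of Proposition~\ref{prop:OPvsWORDS} with Burstein's formula \eqref{eq:Burstein-GF} for $W(x,y\,|\,p)$ from Theorem~\ref{thm:Burstein-GFandFor}. The crucial observation is that the pattern $p = 321$ is an involution, so $p^{-1} = 321$ as well, and we may use Burstein's formula on the right-hand side of \eqref{eq:OPvsWORDS-genfunc} without any relabelling.

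Concretely, I would substitute $x \mapsto x/(1+x)$ into the right-hand side of \eqref{eq:Burstein-GF} and then multiply the result by $1/(1+x)$. The substitution is easy to carry out termwise because of the simple identity $1 - x/(1+x) = 1/(1+x)$: the first summand contributes $1/(1+x)$; the second contributes
\[
\frac{1}{1+x}\cdot\frac{x/(1+x)}{1-y} = \frac{x}{(1+x)^2(1-y)};
\]
the rational prefactor of the third summand becomes
\[
\frac{1}{1+x}\cdot\frac{(x/(1+x))^2}{(1/(1+x))(1-2y)} = \frac{x^2}{(1+x)^2(1-2y)};
\]
and the argument of the Catalan generating function simplifies as
\[
\frac{(x/(1+x))\,y(1-y)}{(1/(1+x))(1-2y)^2} = \frac{xy(1-y)}{(1-2y)^2}.
\]
Assembling these three pieces yields exactly the claimed generating function.

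No serious obstacle arises: once \eqref{eq:OPvsWORDS-genfunc} is in hand, the argument is purely a bookkeeping substitution, with the cancellation $1 - x/(1+x) = 1/(1+x)$ doing all the work. The only point that really needs to be said is that $321$ is an involution, which is what allows Burstein's formula to be applied without first invoking a symmetry to replace $p^{-1}$ by $p$. A quick consistency check at $y = 0$ (where both sides must reduce to $1$, since $\op_{0,k}(321) = 0$ for $k \geq 1$) confirms that no spurious constant has been introduced by the manipulation.
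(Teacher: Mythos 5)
Your proof is correct and is essentially identical to the paper's own argument: the paper likewise obtains the result by combining the generating-function form \eqref{eq:OPvsWORDS-genfunc} of Proposition~\ref{prop:OPvsWORDS} with Burstein's formula, the whole computation reducing to the substitution $x\mapsto x/(1+x)$ and the cancellation $1-x/(1+x)=1/(1+x)$, exactly as you carry it out. Your remark that $321$ is an involution (equivalently, one could invoke Burstein's result that $|[k]^n(p)|$ is the same for all $p\in\Sy_3$) is the right justification for replacing $p^{-1}$ by $p$, and your check at $y=0$ is a sensible sanity test.
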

  This result was also obtained by Chen et al.~\cite{Chen} by a different approach.
Similarly, if we combine Proposition~\ref{prop:OPvsWORDS} with
Theorem~\ref{thm:SimplificationBurstein} and the obvious fact that
$|[1]^n(p)|=1$ for any permutation $p$ of length at least 2, we
arrive at the next formula.

\begin{thm}\label{thm:AvOP-length3}
For  $n\geq k\geq1$, there holds
\begin{align*}
\op_{n,k}(321)
&=(-1)^{k-1}k+\sum_{j=0}^{k-2}(-1)^{k-j}\binom{k}{j+2}2^{n-2j}
\sum_{i=0}^{j}\frac{2j-2i+1}{j+1}\binom{2i}{i}\binom{2(j-i)}{j-i}\binom{n+2i}{2i}.
\end{align*}
\end{thm}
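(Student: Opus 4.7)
The plan is simply to combine Proposition~\ref{prop:OPvsWORDS} with Theorem~\ref{thm:SimplificationBurstein}. Since $321$ is an involution in $\Sy_3$, we have $(321)^{-1}=321$, so Proposition~\ref{prop:OPvsWORDS} specializes to
\begin{equation*}
\op_{n,k}(321)=\sum_{j=1}^{k}\binom{k}{j}(-1)^{k-j}\,|[j]^n(321)|.
\end{equation*}

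Next I would split this sum according to whether $j=1$ or $j\geq 2$. For $j=1$, any word of length $n$ over a one-letter alphabet is constant and so avoids every pattern of length $\geq 2$; hence $|[1]^n(321)|=1$ and the $j=1$ term contributes $\binom{k}{1}(-1)^{k-1}=(-1)^{k-1}k$, which is exactly the leading summand of the claimed identity. For $2\leq j\leq k$, I would insert the single-sum expression of Theorem~\ref{thm:SimplificationBurstein} for $|[j]^n(321)|$ and then reindex the outer summation by $j\mapsto j+2$. Under this shift, $2^{n-2(j-2)}$ becomes $2^{n-2j}$, the sign $(-1)^{k-j-2}$ becomes $(-1)^{k-j}$, the prefactor $1/(j-1)$ becomes $1/(j+1)$, the inner weight $2j-2i-3$ becomes $2j-2i+1$, and the binomial $\binom{2(j-2-i)}{j-2-i}$ becomes $\binom{2(j-i)}{j-i}$, while $\binom{2i}{i}\binom{n+2i}{2i}$ is unchanged.

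Matching these transformed factors against the right-hand side of the claimed identity, term by term, completes the argument. The only real obstacle is the careful bookkeeping of the index shift; no new combinatorial or analytic input is required beyond Proposition~\ref{prop:OPvsWORDS} and Theorem~\ref{thm:SimplificationBurstein}, which is precisely why the author presents this explicit formula as an immediate consequence of the two previous results.
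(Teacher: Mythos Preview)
Your proposal is correct and follows exactly the approach indicated in the paper: combine Proposition~\ref{prop:OPvsWORDS} (using $(321)^{-1}=321$) with the trivial evaluation $|[1]^n(321)|=1$ for the $j=1$ term and with Theorem~\ref{thm:SimplificationBurstein} for $j\ge 2$, then shift the outer index by~$2$. The paper does not even spell out these routine manipulations, so your write-up is in fact more detailed than the original.
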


Note that the second equation in~\eqref{eq:op321-k leq 2}
and~\eqref{eq:Godbole-k=3} are just the particular cases $k=2$ and
$k=3$ of the above formula.  The next three values read
\begin{align*}
\op_{n,4}(321)&= \frac{1}{3}(n^4+ 10 n^3-37 n^2-166 n+576)2^{n-6}-4,\\
\op_{n,5}(321)&=\frac{1}{9}(n^6+ 21 n^5-11 n^4-1125 n^3+1954 n^2+12984 n-36864)2^{n-10}+5,\\
\op_{n,6}(321)&=\frac{1}{45}(n^8+ 36 n^7+ 162 n^6-3528 n^5-8751 n^4+145044 n^3\\
            &\hspace{5cm}-144052 n^2-1463472 n+3686400)2^{n-14}-6.
\end{align*}
 More generally, if we interchange the sums over $i$ and $j$ in Theorem~\ref{thm:SimplificationBurstein},
 after some simplification, we obtain the following result.
\begin{cor}\label{cor:AvOP-length3}
For $k\geq2$ and $n\geq1$, there holds
\begin{align*}
\op_{n,k}(321)&=P_k(n)2^{n}+(-1)^{k-1}k,
\end{align*}
where, for fixed $k\geq2$, $P_k(n)$ is the polynomial in $n$ of
degree $2k-4$ given by
\begin{align*}
 P_k(n)&=\sum_{i=0}^{k-2}\frac{(n+1)_{2i}}{i!^2}\sum_{j=i}^{k-2}(-1)^{k-j}2^{-2j}\binom{k}{j+2}
\frac{2j-2i+1}{j+1}\binom{2(j-i)}{j-i}.
\end{align*}
\end{cor}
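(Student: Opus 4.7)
The plan is to start from the explicit double sum for $\op_{n,k}(321)$ in Theorem~\ref{thm:AvOP-length3} and simply exchange the two summation indices; the desired form then appears by collecting the $n$-dependent factors. The calculation is essentially mechanical, and the only non-routine step is verifying that the degree of $P_k(n)$ is exactly $2k-4$ rather than just at most $2k-4$.

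First, I would pull the factor $2^n$ out of the $j$-sum and swap the order of summation: the ranges $0\leq j\leq k-2$, $0\leq i\leq j$ in Theorem~\ref{thm:AvOP-length3} become $0\leq i\leq k-2$, $i\leq j\leq k-2$, yielding
\begin{align*}
\op_{n,k}(321) = (-1)^{k-1}k + 2^n\sum_{i=0}^{k-2}\binom{2i}{i}\binom{n+2i}{2i}\sum_{j=i}^{k-2}(-1)^{k-j}\binom{k}{j+2}2^{-2j}\frac{2j-2i+1}{j+1}\binom{2(j-i)}{j-i}.
\end{align*}
I would then apply the elementary identity
\begin{align*}
\binom{2i}{i}\binom{n+2i}{2i} = \frac{(2i)!}{(i!)^2}\cdot\frac{(n+1)_{2i}}{(2i)!} = \frac{(n+1)_{2i}}{(i!)^2}
\end{align*}
to rewrite the coefficient of $2^n$ as precisely the expression claimed for $P_k(n)$. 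This proves the identity.

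It remains to show that $P_k(n)$ is a polynomial in $n$ of degree exactly $2k-4$. Since $(n+1)_{2i}$ is a polynomial in $n$ of degree $2i$, the degree of $P_k(n)$ is at most $2(k-2)=2k-4$, with equality holding iff the coefficient of $(n+1)_{2(k-2)}$ is nonzero. This coefficient comes from the $i=k-2$ term only, whose inner $j$-sum reduces to the single summand $j=k-2$, equal to
\begin{align*}
(-1)^{2}\binom{k}{k}\,4^{-(k-2)}\,\frac{1}{k-1}\binom{0}{0} = \frac{1}{(k-1)\,4^{k-2}}\neq 0.
\end{align*}
Combined with the prefactor $1/((k-2)!)^2$ and the fact that the leading coefficient of $(n+1)_{2(k-2)}$ is $1$, the leading coefficient of $P_k(n)$ equals $1/\bigl(((k-2)!)^2(k-1)4^{k-2}\bigr)\neq 0$, so $\deg P_k = 2k-4$.

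The main obstacle is purely bookkeeping: keeping the alternating sign $(-1)^{k-j}$, the shifted binomial $\binom{k}{j+2}$, and the rising factorial $(n+1)_{2i}$ correctly synchronized through the interchange of summations. No identity more sophisticated than the trivial Pochhammer manipulation above is required.
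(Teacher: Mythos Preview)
Your proposal is correct and follows essentially the same approach as the paper: interchange the order of summation in the double sum of Theorem~\ref{thm:AvOP-length3}, pull out $2^n$, and rewrite $\binom{2i}{i}\binom{n+2i}{2i}$ as $(n+1)_{2i}/(i!)^2$. Your explicit verification that the leading coefficient of $P_k(n)$ is $\bigl((k-2)!^2(k-1)4^{k-2}\bigr)^{-1}\neq 0$, and hence that $\deg P_k=2k-4$ exactly, is a detail the paper leaves implicit in the phrase ``after some simplification.''
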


  An immediate but interesting consequence of the above result is
given in the next result. This claim follows directly by applying
the theory of rational generating functions. The proof details are
omitted.
\begin{cor}\label{cor:AvOP-rec}
For any fixed $k\geq2$, the following linear inhomogeneous
recurrence relation holds for $n\geq 2k-2$
\begin{align*}
\op_{n,k}(321)&=(-1)^{k}k-\sum_{j=1}^{2k-3}(-2)^{j}\binom{2k-3}{j}\op_{n-j,k}(321).
\end{align*}
\end{cor}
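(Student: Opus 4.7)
My plan is to extract the recurrence directly from the closed form given by Corollary \ref{cor:AvOP-length3}, namely
\[
\op_{n,k}(321) \;=\; P_k(n)\,2^n + (-1)^{k-1}k \qquad (n\geq 1),
\]
in which $P_k$ is a polynomial of degree $2k-4$ in $n$. The key observation is that the coefficients $(-2)^j\binom{2k-3}{j}$ appearing in the stated recurrence are precisely the binomial-expansion coefficients of the operator $(1-2E^{-1})^{2k-3}$, where $E^{-1}$ denotes the backward shift $E^{-1}a_n:=a_{n-1}$. Thus the task reduces to verifying the operator identity
\[
(1-2E^{-1})^{2k-3}\,\op_{n,k}(321) \;=\; (-1)^{k}k \qquad (n\geq 2k-2);
\]
expanding the left-hand side by the binomial theorem and isolating the $j=0$ term then immediately produces the recurrence in the claimed form.

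To establish this operator identity I would treat the two summands of the closed form separately. A direct computation gives $(1-2E^{-1})[P(n)\,2^n] = (P(n)-P(n-1))\,2^n$ for any polynomial $P$, so every application of $1-2E^{-1}$ preserves the $2^n$ factor while dropping the degree of the polynomial part by one. After $2k-3$ applications, the polynomial $P_k$ of degree $2k-4$ has been reduced to zero, and hence $(1-2E^{-1})^{2k-3}[P_k(n)\,2^n]=0$. On any constant $c$ the same operator acts by $c\mapsto -c$, so $(1-2E^{-1})^{2k-3}[(-1)^{k-1}k] = (-1)^{2k-3}(-1)^{k-1}k = (-1)^{k}k$. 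Summing the two contributions yields the claimed identity.

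The only bookkeeping point is that when we expand $(1-2E^{-1})^{2k-3}\op_{n,k}(321)$ we encounter $\op_{n-j,k}(321)$ for $0\leq j\leq 2k-3$, each of which must actually be described by the closed form of Corollary \ref{cor:AvOP-length3}; this requires $n-(2k-3)\geq 1$, which is exactly the hypothesis $n\geq 2k-2$ in the statement. There is no genuine obstacle beyond this: the argument is the standard extraction of a linear recurrence from a polynomial-exponential-plus-constant closed form, equivalent to the observation that $\sum_{n\geq 0}\op_{n,k}(321)\,y^n$ is a rational function with denominator dividing $(1-y)(1-2y)^{2k-3}$, precisely the ``theory of rational generating functions'' to which the paper alludes.
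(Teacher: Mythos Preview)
Your proof is correct and is essentially the approach the paper has in mind: the paper merely asserts that the recurrence ``follows directly by applying the theory of rational generating functions'' to Corollary~\ref{cor:AvOP-length3} and omits the details, and your finite-difference/operator computation $(1-2E^{-1})^{2k-3}\big[P_k(n)2^n+(-1)^{k-1}k\big]=(-1)^k k$ is precisely the standard way of extracting that recurrence from the closed form (equivalently, from the denominator $(1-y)(1-2y)^{2k-3}$ of the generating function). Your bookkeeping on the range $n\geq 2k-2$, needed so that the closed form applies to every $\op_{n-j,k}(321)$ with $0\leq j\leq 2k-3$, is also exactly right.
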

For example, when $k=3$, we get the recurrence relation
\begin{align*}
 \op_{n,3}(321)&=6 \,\op_{n-1,3}(321)-12\, \op_{n-2,3}(321)+8
\,\op_{n-3,3}(321)-3.
\end{align*}

A natural question to ask is whether formulas~\eqref{eq:op321-k=n}
and~\eqref{eq:Godbole-conjecture} for $\op_{n,n}(321)$ and
$\op_{n,n-1}(321)$ can be directly derived from
Theorem~\ref{thm:AvOP-length3}. The answer is yes but the path to
these formulas is far from obvious as illustrated in the appendix at
the end of this paper where we reproduce a calculation of
Krattenthaler~\cite{Kratt} for proving~\eqref{eq:Godbole-conjecture}
 directly  from Theorem~\ref{thm:AvOP-length3}. As we shall see in
 the next subsection,  it is more convenient to derive~\eqref{eq:op321-k=n}
 and~\eqref{eq:Godbole-conjecture}  from the next two recurrence relations. These can also be easily used to generate and extend
Table~\ref{table:1}.

\begin{cor}\label{cor:AvPartitions-Double rec}
For all integer $n$ (possibly negative) and $k\geq 0$, there holds
\begin{align}
\begin{split}\label{eq:DoubleRec1}
 (n+4)\op_{n+3,k+1}(321)&=(5n+14)\op_{n+2,k+1}(321)+(4n+10)\op_{n+2,k}(321)\\
                   &\quad  -(8n+14)\left(\op_{n+1,k+1}(321)+\op_{n+1,k}(321)\right)\\
                   &\quad  +(4n+4)\left(\op_{n,k+1}(321)+\op_{n,k}(321)\right),
\end{split}\\
\begin{split}\label{eq:DoubleRec2}
 (k+1)\op_{n+2,k+2}(321)&=(4k+4)\left(\op_{n+1,k+2}(321)-\op_{n,k+2}(321)\right)\\
                   &\quad -(k+2)\op_{n+2,k+1}(321)\\
                   &\quad +(8k+6)\left(\op_{n+1,k+1}(321)-\op_{n,k+1}(321)\right)\\
                   &\quad +(4k+2)\left(\op_{n+1,k}(321)-\op_{n,k}(321)\right).
\end{split}
\end{align}
\end{cor}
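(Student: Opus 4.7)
The strategy is to exploit the algebraicity of the bivariate generating function
\begin{equation*}
F(x,y)=\sum_{n\ge k\ge 0}\op_{n,k}(321)\,x^k y^n
\end{equation*}
provided by Theorem~\ref{thm:AvPartitions-GF}. Setting
\begin{equation*}
H(x,y):=F(x,y)-\frac{1}{1+x}-\frac{x}{(1+x)^2(1-y)}=\frac{x^2\,C(z)}{(1+x)^2(1-2y)},\qquad z:=\frac{xy(1-y)}{(1-2y)^2},
\end{equation*}
and substituting into $C(z)=1+zC(z)^2$, a short manipulation produces the quadratic algebraic equation
\begin{equation}\label{eq:algH}
y(1-y)(1+x)^4\,H^2 - x(1+x)^2(1-2y)\,H + x^3 = 0,
\end{equation}
whose discriminant is $x^2(1+x)^4\bigl(1-4y(1+x)(1-y)\bigr)$. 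Thus $F$ is algebraic of degree two over $\Q(x,y)$, and in particular D-finite in each variable separately.

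To derive \eqref{eq:DoubleRec1}, I would differentiate \eqref{eq:algH} with respect to $y$ and reduce $H^2$ modulo \eqref{eq:algH} itself to obtain a first-order linear relation of the form
\begin{equation*}
A_0(x,y)\,H + A_1(x,y)\,\partial_y H = B(x,y),\qquad A_0,A_1,B\in\Q[x,y].
\end{equation*}
Extracting the coefficient of $x^{k+1}y^{n+3}$ on both sides, and using that the Euler operator $y\partial_y$ acts as multiplication by $n$ on the $y^n$-coefficient, converts this equation into a linear relation among the values $\op_{m,k'}(321)$ for $m\in\{n,n+1,n+2,n+3\}$ and $k'\in\{k,k+1\}$ with coefficients linear in~$n$; the combinations $\op_{m,k+1}(321)+\op_{m,k}(321)$ appearing in~\eqref{eq:DoubleRec1} arise from the $(1+x)$-factors present in the coefficients of the PDE. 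The argument for \eqref{eq:DoubleRec2} is entirely parallel: differentiate \eqref{eq:algH} in $x$, use $x\partial_x$ to produce polynomial-in-$k$ coefficients, reduce $H^2$, and extract the coefficient of $x^{k+2}y^{n+2}$.

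The main obstacle is carrying out the algebraic elimination so that the precise coefficients $(n+4),(5n+14),(4n+10),\ldots$ in~\eqref{eq:DoubleRec1}, and their $k$-analogues in~\eqref{eq:DoubleRec2}, emerge exactly rather than up to a common polynomial factor. This is a routine but lengthy calculation, which can be carried out by hand or mechanically by feeding \eqref{eq:algH} into an Ore-algebra package such as \texttt{gfun}. Alternatively, one bypasses generating functions by writing $\op_{n,k}(321)=P_k(n)\,2^n+(-1)^{k-1}k$ as in Corollary~\ref{cor:AvOP-length3}: a direct computation shows that the summand $(-1)^{k-1}k$ satisfies both recurrences identically (for~\eqref{eq:DoubleRec1}, the coefficient in $n$ collects to $(n+4)(-1)^k(k+1)$ on both sides; the check for~\eqref{eq:DoubleRec2} is analogous), so the task reduces to a polynomial-hypergeometric identity for $P_k(n)\,2^n$ that is amenable to Zeilberger's creative-telescoping algorithm applied to the double-sum formula of Theorem~\ref{thm:AvOP-length3}. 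Since both recurrences are linear, verifying the resulting candidate against a finite window of entries in Table~\ref{table:1} together with the conventions~\eqref{eq:op321-IniVal} completes the proof.
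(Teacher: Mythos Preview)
Your approach is essentially the same as the paper's: derive a first-order linear PDE (in $y$, respectively in $x$) satisfied by the bivariate generating function of Theorem~\ref{thm:AvPartitions-GF}, and then extract coefficients. The paper is in fact even more terse than your sketch---it simply writes down the two PDEs explicitly and declares that checking them (with a computer algebra system) against the closed form is routine; your derivation via the algebraic equation~\eqref{eq:algH} and elimination of $H^2$ is exactly the mechanism that produces those PDEs, so once you carry out the computation you will recover the paper's displayed equations verbatim. One caveat: your final sentence, that ``verifying the resulting candidate against a finite window of entries \ldots\ completes the proof,'' is not by itself a valid argument---a finite check does not certify an infinite family of identities---but it is unnecessary anyway, since the coefficient extraction from the PDE already proves the recurrences for all $n,k$.
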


\begin{proof}
Let $A(x,y)$ denote the left-hand side of~\eqref{eq:Burstein-GF}.
Using Theorem~\ref{thm:AvPartitions-GF} (and a computer algebra
system), it is a routine matter to check that
\begin{multline*}
y\left(1-(5+4x)y+8(1+x)y^2-4(1+x)y^3\right)\frac{\partial}{\partial y}A(x,y)\\
+\left(1-2(2+x)y+6(1+x)y^2-4(1+x)y^3\right)A(x,y)
-\left(1-4y+6y^2-4y^3\right)=0,
\end{multline*}
\begin{multline*}
x\left((1-2y)^2+(1-8y+8y^2)x-4y(1-y)x^2\right)\frac{\partial}{\partial x}A(x,y)\\
-\left((1-2y)^2-(1-2y^2+2y)x+2y(1-y)x^2\right)A(x,y)
+\left((1-2y)^2-(1-2y^2+2y)x\right)=0.
\end{multline*}
Equating the coefficients of $x^{k+2} y^{n+3}$ on both sides of
these partial differential equations leads to the desired relations.
\end{proof}

 \subsection{Some properties of the diagonals of the array $[\op_{n,k}(321)]_{n\geq k\geq 1}$}
Here we are concerned with the sequences $(\op_{n+r,n}(321))_{n\geq
1}$, $r\geq0$. Our main goal is to establish Theorem~\ref{thm:Diag},
which can be seen as a far-reaching generalization
of~\eqref{eq:op321-k=n} and~\eqref{eq:Godbole-conjecture}. For ease
of notation, set $Y_r(n)=\op_{n+r,n}$ in the rest of this section.
By the convention~\eqref{eq:op321-IniVal},  $Y_r(n)$ is defined for
all $n\geq 0$ and all integers $r$.

Let $m\geq 0$ and $r$ be an integer.
Specializing~\eqref{eq:DoubleRec1} to $(n,k)=(m+r-2,m)$
and~~\eqref{eq:DoubleRec2} to $(n,k)=(m+r-1,m)$, after some
rearrangement, we obtain the equations
\begin{multline*}
(m+r+2)\,Y_{r}(m+1)-(4m+4r+2)\,Y_{r}(m)=(5m+5r+4)\,Y_{r-1}(m+1)\\
                     -(8m+8r-2)\left(Y_{r-2}(m+1)+Y_{r-1}(m)\right)+(4m+4r-4)\left(Y_{r-3}(m+1)+Y_{r-2}(m)\right),
\end{multline*}
\begin{multline*}
(m+2)\,Y_{r}(m+1)-(4m+2)\,Y_{r}(m)=(4m+4)\left(Y_{r-2}(m+2)-Y_{r-3}(m+2)\right)\\
                         +(8m+6)\left(Y_{r-1}(m+1)-Y_{r-2}(m+1)\right)-(4m+2)\,Y_{r-1}(m)-(m+1)\,Y_{r-1}(m+2).
\end{multline*}
Note that if we plug $r=0$ into the above equations, we obtain the
identity $$(m+2)\,Y_{0}(m+1)-(4m+2)\,Y_{0}(m)=0,$$ which yields
\begin{align}\label{eq:Y0}
 Y_{0}(m)=\frac{2m(2m-1)}{m(m+1)}\,Y_{0}(m-1)=\cdots=\frac{(2m)!}{m!(m+1)!}Y_{0}(0)=\frac{(2m)!}{m!(m+1)!}.
\end{align}
This is~\eqref{eq:op321-k=n}.

 If we now add $-(m+2)$ times the first equation to $(m+r+2)$ times
the second equation, after some simplification, we arrive at
\begin{align}\label{eq:rec-Yr}
\begin{split}
 6r\,Y_{r}(m)&= -(m+1)(m+r+2)\,Y_{r-1}(m+2)+ (3m^2+(3r+8)m-4r+4)\,Y_{r-1}(m+1)\\
            &\quad +(4m^2+4m(r+1)+14r-8)\,Y_{r-1}(m)+4(m+1)(m+r+2)\,Y_{r-2}(m+2)\\
            &\quad -(8m-10r+16)\,Y_{r-2}(m+1)-4(m+2)(m+r-1)\,Y_{r-2}(m)\\
            &\quad -4(m+1)(m+r+2)\,Y_{r-3}(m+2)-4(m+2)(m+r-1)\,Y_{r-3}(m+1).
\end{split}
\end{align}
 If we plug $r=1$ into the above equation, we obtain
\begin{align*}
6\,Y_{1}(m)&= -(m+1)(m+3)\,Y_{0}(m+2)+
(3m^2+11m)\,Y_{0}(m+1)+(4m^2+8m+6)\,Y_{0}(m),
\end{align*}
which simplifies, by~\eqref{eq:Y0}, to
\begin{align}\label{eq:Y1}
Y_{1}(m)&=\frac {(2m)!} {m!\,(m+2)!}(3m^2).
\end{align}
 This is exactly Godbole et al.'s formula~\eqref{eq:Godbole-conjecture}.
 Similarly, specializing~\eqref{eq:rec-Yr} to $r=2$, after a routine
computation, we obtain
\begin{align}\label{eq:Y2}
 Y_{2}(m)&=\frac{1}{2}\frac {(2m)!} {m!\,(m+3)!}(9m^4+16m^3+5m^2-6m).
\end{align}
More generally, we have the next result.
\begin{thm}\label{thm:Diag}
For each $r\geq 0$, there exists a polynomial $Q_r(n)$ in $n$ of
degree less than or equal to $2r$ such that
\begin{align*}
\op_{n+r,n}(321)&=\frac {(2n)!} {n!\,(n+r+1)!}\,Q_r(n)\qquad(n\geq
0).
\end{align*}

 Moreover, the polynomials $Q_r(n)$ satisfy for $r\geq 1$ the recurrence
\begin{align*}
6r\,Q_{r}(n)&= -2(2n+1)_3\,Q_{r-1}(n+2)+ 2(3n^2+(3r+8)n-4r+4)(2n+1)\,Q_{r-1}(n+1)\\
            &\quad +(4n^2+4n(r+1)+14r-8)(n+r+1)\,Q_{r-1}(n)+8(n+r+2)(2n+1)_3\,Q_{r-2}(n+2)\\
            &\quad -2(8n-10r+16)(n+r+1)(2n+1)\,Q_{r-2}(n+1)-4(n+2)(n+r-1)_3\,Q_{r-2}(n)\\
            &\quad -8(n+r+1)_2(2n+1)_3\,Q_{r-3}(n+2)-8(n+2)(n+r-1)_3(2n+1)\,Q_{r-3}(n+1),
\end{align*}
with initial conditions $Q_0(n)=1$ and $Q_j(n)=0$ for $j<0$.
\end{thm}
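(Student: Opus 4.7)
The plan is to proceed by strong induction on $r$, treating simultaneously the existence of the polynomial $Q_r(n)$, the degree bound, and the stated recurrence. The base case $r=0$ is immediate from~\eqref{eq:Y0}: there $Y_0(n)=(2n)!/(n!\,(n+1)!)$, so $Q_0(n)=1$ works and has degree $0\leq 2\cdot 0$. The convention~\eqref{eq:op321-IniVal} forces $Y_s\equiv 0$ for $s<0$, which I represent by taking $Q_s\equiv 0$ for $s<0$; with this convention all three auxiliary indices $r-1,r-2,r-3$ are well-defined for every $r\geq 1$.

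For the inductive step I would start from the three-term recurrence~\eqref{eq:rec-Yr} derived just above the theorem, which expresses $6r\,Y_r(m)$ as a linear combination of shifts of $Y_{r-1}$, $Y_{r-2}$ and $Y_{r-3}$. Substituting the inductive ansatz $Y_s(n)=\frac{(2n)!}{n!\,(n+s+1)!}Q_s(n)$ on the right and $Y_r(m)=\frac{(2m)!}{m!\,(m+r+1)!}Q_r(m)$ on the left, then multiplying through by $\frac{n!\,(n+r+1)!}{(2n)!}$, each of the eight right-hand side terms becomes a polynomial in $n$ times $Q_s(n+\epsilon)$ for some $s\in\{r-1,r-2,r-3\}$ and $\epsilon\in\{0,1,2\}$. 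The required simplifications are elementary factorial manipulations such as
\[
\frac{(2n+4)!}{(2n)!}\cdot\frac{n!\,(n+r+1)!}{(n+2)!\,(n+r+2)!}=\frac{4(2n+1)(2n+3)}{n+r+2},
\]
and a careful comparison matches these exactly with the polynomial factors in the recurrence claimed by Theorem~\ref{thm:Diag}. Since $6r\neq 0$ for $r\geq 1$, this recurrence determines $Q_r$ uniquely.

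It remains to confirm that the $Q_r(n)$ produced this way is actually a polynomial of degree at most $2r$. Polynomiality is automatic from the inductive hypothesis, so only the degree bound requires work. Inspection of the stated recurrence shows that the $Q_{r-2}$ terms contribute degree at most $(2r-4)+4=2r$ and the $Q_{r-3}$ terms degree at most $(2r-6)+5=2r-1$, while the three $Q_{r-1}$ terms, each multiplied by a cubic in $n$, \emph{a priori} contribute degree up to $(2r-2)+3=2r+1$. This is the only place where cancellation is required, and it is the step I expect to be the main (though brief) obstacle. Writing $Q_{r-1}(n)=c\,n^{2r-2}+O(n^{2r-3})$, the three $Q_{r-1}$ contributions to the coefficient of $n^{2r+1}$ come from the leading cubic coefficients $8$, $6$, $4$ of $(2n+1)_3$, $(3n^2+\ldots)(2n+1)$ and $(4n^2+\ldots)(n+r+1)$ respectively, multiplied by the external constants $-2$, $+2$, $+1$; they sum to $(-16+12+4)c=0$. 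Hence $\deg Q_r\leq 2r$, and the induction closes.
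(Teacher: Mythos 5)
Your proposal is correct and follows essentially the same route as the paper: substitute the ansatz $Y_s(n)=\frac{(2n)!}{n!\,(n+s+1)!}Q_s(n)$ into the recurrence~\eqref{eq:rec-Yr}, clear the factorial prefactors to obtain the stated recurrence for the $Q_r$, and then kill the potential $n^{2r+1}$ term via the leading-coefficient cancellation $-16+12+4=0$, which is exactly the computation in the paper. The only cosmetic difference is that you run the degree induction from $r=1$ while the paper checks $r\leq 2$ by hand and inducts from $r=3$; both are valid.
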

 Using the above recurrence, we easily obtain the first values of the polynomials $Q_{r}(n)$:
\begin{align*}
Q_{1}(n)&=3n^2,\\
Q_{2}(n)&=\tfrac{1}{2!}(9 n^4 + 16 n^3+5n^2-6n),\\
Q_{3}(n)&=\tfrac{1}{3!}(27n^6+144n^5+255n^4+114n^3-84n^2-96n),\\
Q_{4}(n)&=\tfrac{1}{4!}(81n^8+864n^7+3558n^6+6780n^5+5085n^4-1452n^3-4116n^2-2160n),\\
Q_{5}(n)&=\tfrac{1}{5!}(243n^{10}+4320n^9+31770n^8+123420n^7+265359n^6+284580n^5+50820n^4\\
        &\quad\quad-199200n^3-189792n^2-69120n).
\end{align*}
Note that the first two values are consistent with~\eqref{eq:Y1}
and~\eqref{eq:Y2}. We now turn to the proof of the theorem.

\begin{proof} Recall that we have set $Y_r(n)=\op_{n+r,n}(p)$ for $n\geq 0$ and $r\in\Z$. Let $Q_{r}(n)$ be
defined for $n\geq 0$ and $r\in\Z$ by
 \begin{align}\label{eq:defPol}
 Y_r(n)=\frac {(2n)!} {n!\,(n+r+1)!}\,Q_r(n),
 \end{align}
 where, by convention, we set $a!=1$ if $a$ is negative. In particular, by~\eqref{eq:op321-IniVal}, $Q_r(n)=Y_r(n)=0$ if $r< 0$.
 We also have $Q_{0}(n)=1$ by~\eqref{eq:Y0}.

 Suppose $r\geq 0$. To prove that the $Q_r(n)$'s defined by~\eqref{eq:defPol} satisfy the recurrence in Theorem~\ref{thm:Diag},
 it suffices to plug~\eqref{eq:defPol} into~\eqref{eq:rec-Yr} and then multiply the resulting equation by $n!(n+r+1)!/(2n)!$.
 After a routine simplification, we obtain the desired recurrence.

 We now prove that the $Q_r(n)$'s in~\eqref{eq:defPol}, $r\geq 0$, are polynomials in $n$ of degree at most~$2r$.
Note that, by~\eqref{eq:Y0}--\eqref{eq:Y2}, this is true for $0\leq
r\leq 2$. We proceed by induction on $r$.
 Suppose that $r$ is a fixed integer $\geq 3$ and that $Q_t(n)$ is a polynomial in $n$ of
degree at most $2t$ for $0\leq t\leq r-1$. Then, from the recurrence
in Theorem~\ref{thm:Diag}, we see that $Q_r(n)$ is a polynomial in
$n$ of degree at most $2r+1$. Furthermore, from the same recurrence,
the coefficient of $n^{2r+1}$ in $6r Q_r(n)$ is equal to the leading
coefficient of $Q_{r-1}(n)$ multiplied by the coefficient of $n^4$
in
\begin{align*}
-(2n+1)_4+(3n^2+(3r+8)n-4r+4)(2n+1)_2+(4n^2+4n(r+1)+14r-8)n(n+r+1).
\end{align*}
But the coefficient of $n^4$ in the above expression is easily shown
to be $-2^4+3\cdot 2^2+4$, so it is zero. This ends the proof.
\end{proof}


\medskip
{\bf Acknowledgements.} The author would like to thank Christian
Krattenthaler for spending time answering the question presented
after Corollary~\ref{cor:AvOP-rec} and for writing the solution
given below.



\section*{Appendix}
 This section contains a proof of~\eqref{eq:Godbole-conjecture}
 directly from the formula in Theorem~\ref{thm:AvOP-length3}.
 For convenience's reader, we restate the equation we are interested in:
\begin{align}\label{eq:AvOP-length3}
\op_{n+1,n}(321)&=3n^2\frac{(2n)!}{n!(n+2)!} \qquad(n\geq1).
\end{align}
By Theorem~\ref{thm:AvOP-length3}, the above equation is equivalent
to the relation
$$
\sum _{i=0} ^{n-2} \sum _{j=i} ^{n-2}(-1)^n\frac {2^
{n+1}\,(-1/4)^j} {(n+1)(j+1)} \frac {(n+1+2i)!\,(2j-2i+1)!}
{(j+2)!\,(n-2-j)!\,i!^2\,(j-i)!^2}=(-1)^n n
+3n^2\frac{(2n)!}{n!(n+2)!}.
$$

The proof of this sum evaluation that we reproduce here is due to
and was written by Christian Krattenthaler~\cite{Kratt1}. We
reproduce his calculation.

\begin{proof}
We want to evaluate the double sum
$$
\sum _{i=0} ^{n-2} \sum _{j=i} ^{n-2}(-1)^n\frac {2^
{n+1}\,(-1/4)^j} {(n+1)(j+1)} \frac {(n+1+2i)!\,(2j-2i+1)!}
{(j+2)!\,(n-2-j)!\,i!^2\,(j-i)!^2}.
$$
We start by rewriting the sum over $j$ in hypergeometric notation:
\begin{equation*} 
\sum _{i=0} ^{n-2} (-1)^{n+i}\frac {2^ {n-2i+1}} {(n+1)(i+1)} \frac
{(n+1+2i)!} {(i+2)!\,(n-2-i)!\,i!^2} {}_3 F_2\!\left[\begin{matrix}
i+1,\frac {3} {2},-n+i+2\\
i+2,i+3
\end{matrix};1\right].
\end{equation*}
To the $_3F_2$-series, we apply the transformation formula (see
\cite[(3.1.1)]{GaRa}),
\begin{equation*} 
{} _{3} F _{2} \!\left [ \begin{matrix} { a, b, -N}\\ { d,
e}\end{matrix} ;
   {\displaystyle 1}\right ]  =
  {\frac{ ({ \textstyle e-b}) _{N} }{({ \textstyle e}) _{N} }}\,
{} _{3} F _{2} \!\left [ \begin{matrix} { b, d-a, -N}\\ { d, 1 + b -
e
       - N}\end{matrix} ; {\displaystyle 1}\right ],
\end{equation*}
where $N$ is a nonnegative integer. Thereby the above expression is
transformed into
\begin{equation*} 
\sum _{i=0} ^{n-2} (-1)^{n+i}\frac {2^ {n-2i+1}} {(n+1)(i+1)} \frac
{(n+1+2i)!\,(i+\frac {3} {2})_{n-i-2}}
{(i+2)!\,(n-2-i)!\,i!^2\,(i+3)_{n-i-2}} {}_3
F_2\!\left[\begin{matrix}
\frac {3} {2},1,-n+i+2\\
i+2,\frac {3} {2}-n
\end{matrix};1\right].
\end{equation*}
Next we write the above $_3F_2$-series as a sum over $j$,
interchange the sums over $i$ and $j$, and finally write the (now)
inner sum over $i$ in hypergeometric notation. This produces the
expression
\begin{equation*} 
\sum _{j=0} ^{n-2} \frac {(-1)^{n}{2^ {n+1}} (\frac {3}
{2})_{n-2}\,(\frac {3} {2})_j\,(2-n)_j} {(n-2)!\,(j+1)!\,(\frac {3}
{2}-n)_j} {}_3 F_2\!\left[\begin{matrix}
\frac {n} {2}+1,\frac {n} {2}+\frac {3} {2},-n+j+2\\
\frac {3} {2},j+2
\end{matrix};1\right].
\end{equation*}
Here we apply the transformation formula (see \cite[Ex.~7, p.~98,
terminating form]{BailAA})
$$ 
{} _{3} F _{2} \!\left [ \begin{matrix} { a, b, -N}\\ { d,
e}\end{matrix} ;
   {\displaystyle 1}\right ]  =
{\frac{      ({ \textstyle d + e-a-b}) _{N} } {({ \textstyle e})
_{N} }} {} _{3} F _{2} \!\left [ \begin{matrix} {  d-a , d-b, -N }\\
{ d, d+e-a
      - b }\end{matrix} ; {\displaystyle 1}\right ]
$$
where $N$ is a non-negative integer. We obtain the expression
\begin{align} \notag
\sum _{j=0} ^{n-2}& \frac {(-1)^{j}{2^ {n+1}} (n-j-1)\, (\frac {3}
{2})_{j}\,(\frac {3} {2})_{n-j-2}} {(n-1)!} {}_3
F_2\!\left[\begin{matrix}
-\frac {n} {2},-\frac {n} {2}+\frac {1} {2},-n+j+2\\
\frac {3} {2},-n+j+1
\end{matrix};1\right]\\
\notag &\kern2cm = \sum _{j=0} ^{n-2} \sum _{i=0} ^{n-j-2} \frac
{(-1)^{j}{2^ {n-2i+1}}n (n-j-i-1)\, (\frac {3} {2}+i)_{j-i}\,(\frac
{3} {2})_{n-j-2}}
{i!\,(n-2i)!}\\
\label{eq:1} &\kern2cm = \sum _{i=0} ^{n-2} \sum _{j=0} ^{n-i-2}
\frac {(-1)^{j}{2^ {n-2i+1}}n (n-j-i-1)\, (\frac {3}
{2}+i)_{j-i}\,(\frac {3} {2})_{n-j-2}} {i!\,(n-2i)!}.
\end{align}
At this point, one should observe (by using the Gosper algorithm;
cf.\ \cite{GospAB} and \cite[\S~II.5]{PeWZAA} --- the particular
implementation that we used is the {\sl Mathematica} implementation
by Paule and Schorn \cite{PaScAA}) that
\begin{equation*} 
\frac {(-1)^{j}{2^ {n-2i+1}}n (n-j-i-1)\, (\frac {3}
{2}+i)_{j-i}\,(\frac {3} {2})_{n-j-2}}
{i!\,(n-2i)!}=G(n,i;j+1)-G(n,i;j),
\end{equation*}
where
\begin{multline*} 
G(n,i;j)=\frac {(-1)^{j+1}{2^ {n-2i}}n \, (\frac {3}
{2}+i)_{j-i}\,(\frac {3} {2})_{n-j-1}}
{(n+1)(n+2)\,i!\,(n-2i)!} 
\left(2 n^2+2
   n-2 i
   n-4 i-2 j n-2 j-1\right).
\end{multline*}
Consequently, the sum over $j$ in \eqref{eq:1} is a telescoping sum,
and thus the right-hand side of \eqref{eq:1} simplifies to
\begin{multline*} 
\sum _{i=0} ^{n-2} \frac {(-1)^{n-i}{2^ {n-2i+1}}n \, (\frac {3}
{2})_{n-i}}
{(n+1)(n+2)\,i!\,(n-2i)!}
+ \sum _{i=0} ^{n-2} \frac {{2^ {n-2i}}n \, (\frac {3} {2})_{n-1}}
{(n+1)(n+2)\,i!\,(n-2i)!\,(\frac {3} {2})_i} \left(2 n^2+2
   n-2 i
   n-4 i-1\right).
\end{multline*}
Writing the sums in hypergeometric notation, we obtain
\begin{multline*} 
\frac {(-1)^{n}{2^ {n+1}}n \, (\frac {3} {2})_{n}} {(n+2)!} {}_2
F_1\!\left[\begin{matrix}
-\frac {n} {2},-\frac {n} {2}+\frac {1} {2}\\
-n-\frac {1} {2}
\end{matrix};1\right]
+(2n^2+2n-1) \frac {{2^ {n}}n \, (\frac {3} {2})_{n-1}} {(n+2)!}
{}_2 F_1\!\left[\begin{matrix}
-\frac {n} {2},-\frac {n} {2}+\frac {1} {2}\\
\frac {3} {2}
\end{matrix};1\right]\\
- \frac {{2^ {n-1}}n \, (\frac {5} {2})_{n-2}} {(n+1)\,(n-2)!} {}_2
F_1\!\left[\begin{matrix}
-\frac {n} {2}+1,-\frac {n} {2}+\frac {3} {2}\\
\frac {5} {2}
\end{matrix};1\right].
\end{multline*}
The $_2F_1$-series can be evaluated by means of the Chu--Vandermonde
summation formula (cf.\ \cite[(1.7.7); Appendix (III.4)]{SlatAC})
$$
{} _{2} F _{1} \!\left [ \begin{matrix} { a, -N}\\ { c}\end{matrix}
; {\displaystyle
   1}\right ]  = {\frac {({ \textstyle c-a}) _{N} }
    {({ \textstyle c}) _{N} }},
$$
where $N$ is a nonnegative integer. After little manipulation, we
arrive at the expression
\begin{multline*} 
(-1)^nn +\frac {(2n^2+2n-1)\,(2n)\,(2n)!} {(n+1)!\,(n+2)!}
-\frac {(2n)!} {(n+1)\,(n-2)!\,(n+1)!}
= (-1)^nn +\frac {3n\,(2n)!} {(n-1)!\,(n+2)!},
\end{multline*}
as required.
\end{proof}


\end{document}